\documentclass[12pt]{article}
\usepackage{amsmath,amsfonts,amsthm,amscd,amssymb,graphicx}
\usepackage{amssymb}
\usepackage{xcolor}
\def\rit{{\Bbb R}}
\def\cit{{\Bbb C}}
\def\nit{{\Bbb N}}

\def\eps{\varepsilon}
\def\G{{\mathcal G}}
\def\bel{\begin{equation*} \begin{aligned}}
\def\eel{\end{aligned} \end{equation*}}

\def\beln{\begin{equation} \begin{aligned}}
\def\eeln{\end{aligned} \end{equation}}

\def\cc{ \hbox{ c.c.}}

\newtheorem{theorem}{Theorem}[section]
\newtheorem{lemma}[theorem]{Lemma}
\newtheorem{e-proposition}[theorem]{Proposition}

\newtheorem{e-definition}[theorem]{Definition\rm}
\newtheorem{remark}{\it Remark\/}

\newtheorem{assumption}{\it Assumption\/}
\def\og{\leavevmode\raise.3ex\hbox{$\scriptscriptstyle\langle\!\langle$~}}
\def\fg{\leavevmode\raise.3ex\hbox{~$\!\scriptscriptstyle\,\rangle\!\rangle$}}

\def\beq{\begin{equation}}
\def\eeq{\end{equation}}

\def\X{\mathcal{X}}
\def\A{\mathcal{A}}

\begin{document}

\centerline{\Large \bf From linear to nonlinear instabilities }

\bigskip

\centerline{\Large \bf with application to plasma columns}

\bigskip

\bigskip

\centerline{Dongfen Bian\footnote{School of Mathematics and Statistics, Beijing Institute of Technology, $100081$ Beijing, China. Email: 
biandongfen@bit.edu.cn}}

\tableofcontents

%%%%%%%%%

\subsubsection*{Abstract}

%%%%%%%%%

This article addresses the problem of proving the nonlinear instability of an equilibrium
starting from its linear instability when there is no existence theory for the corresponding equations.
We design a general method based on the use of analytic functions to overcome this difficulty 
and apply it to the classical problem of the stability of a plasma column 
in magneto-hydrodynamics (MHD) as an illustration.

%%%%%%%%%%%%%%%%%%%%%%%

\section{Introduction}

%%%%%%%%%%%%%%%%%%%%%%%

In some cases we would like to prove that linear instability implies nonlinear instability; however there
exists no good existence theory for the corresponding equations, either because this question has not been investigated
yet, or because precisely the linear instabilities are too strong and prevent even the local existence of solutions.
In this paper we develop a specific method to address such issues and discuss a specific example,
the classical $\theta-$pinch and $z-$pinch instabilities of columns of plasma.

Let us first present the overall strategy. We consider an equation of the form
\beq \label{general1}
\partial_t u = {\cal L}(u)
\eeq
where $u(t,x)$ is a function and ${\cal L}$ is some nonlinear operator. Let $u_0$ be a stationary solution of (\ref{general1}),
namely a function that satisfies
\beq \label{general2}
{\cal L}(u_0) = 0.
\eeq
The linearized equation is 
\beq \label{general3}
\partial_t v = L v,
\eeq
where $L$ is the linear operator 
$$
L = d{\cal L}(u_0).
$$
In physics, the linear stability of stationary states is widely studied and often well-known. 
In many cases, the operator $L$ has also been studied in depth in mathematics, and in particular
its spectrum is well described.

In this paper, we assume that $u_0$ is linearly unstable (in a sense which will be made precise below) and 
prove that linear instability implies nonlinear instability under very general assumptions.

Let us introduce some notations. We write $u = u_0 + v$, then $v$ satisfies
\beq \label{general4}
\partial_t v = L v + Q(v)
\eeq
where $Q(v)$ contains quadratic and higher order terms. In order to simplify the presentation, we assume in this introduction
that $Q(v)$ is exactly quadratic, of the form $Q(v,v)$.
Let $\exp(\lambda_0 t) v_0$ be a linear instability, namely an exponentially growing solution of (\ref{general3}).
In particular
$$
L v_0 = \lambda_0 v_0, \qquad \Re \lambda_0 > 0.
$$
One possibility to construct a nonlinear instability starting from the linear one $\exp(\lambda_0 t) v_0$
is to follow the strategy detailed in \cite{Grenier}.
The first step is to construct a {\it formal} solution $v^{formal}$ of the form
\beq \label{formal}
v^{formal}(t,x) = \sum_{p \ge 1} \eps^p v_p(t,x)
\eeq
with
$$
v_1(t,x) = \exp(\lambda_0 t) v_0 + \cc,
$$
where $\cc$ stands for the complex conjugate and $\eps$ is a small parameter which 
measures the amplitude of the initial perturbation.

The functions $v_p$ for $p \ge 2$ are inductively constructed through
\beq \label{general5}
\partial_t v_p - L v_p = \sum_{1 \le q \le p - 1} Q(v_{q},v_{p - q}).
\eeq
The second step is to truncate (\ref{formal}) and to introduce an approximate solution $v^{app}$
defined by 
\beq \label{formal2}
v^{app}(t,x) = \sum_{p = 1}^N \eps^p v_p(t,x)
\eeq
where $N$ is large enough. Then $v^{app}$ satisfies (\ref{general4}) up to an error term 
of the form $\eps^{N+1} E^{app}(t,x)$ for some function $E^{app}$.
To finish the proof, we consider the solution $v(t)$ of (\ref{general4}) with initial data $v^{app}(0,x)$ and perform
an energy estimate on $v - v^{app}$. 

The strategy of \cite{Grenier} of course fails if we do not know whether $v$ exists or if we do not
know how to make an energy estimate on $v - v^{app}$.
In this case, one possibility is to prove that the whole infinite series (\ref{formal}) converges.

Usually $Q$ involves derivatives and each time we solve (\ref{general5}) we lose regularity.
As a consequence, we can only expect the convergence of (\ref{formal}) for analytic functions.
The techniques of generator functions \cite{Grenier2} appear to be very powerful to ensure such a convergence.
Note that the construction of the nonlinear instability is completely independent from the existence theory
of smooth solutions even if this theory exists. In particular, this strategy does not use energy estimates.

We detail this approach in the current paper. More precisely, in section $2$, we will introduce
specific spaces of analytic functions together with their related generator functions.
We will also detail the assumptions we need on the resolvent of $L$ (and more precisely on its Green function) and on $Q$ to prove that
linear instability implies nonlinear instability (see Theorem \ref{linearnonlinear}).
In section $3$, we apply this general theorem to a particular case, the stability of plasma columns
in the $\theta-$pinch and $z-$pinch configurations.

%%%%%%%%%%%%%%%%%%%%%%%%%%

\subsection*{Notations}

%%%%%%%%%%%%%%%%%%%%%%%%%%

Throughout this paper, the abbreviation $\cc$ stands for ``complex conjugate".
We will use cylindrical coordinates $(r,\theta,z)$.

%%%%%%%%%%%%%%%%%%%%%%%%%%%%%%%%%%%%%%%%%%%%%%%%%%%%%%%%%%%%

\section{A general instability theorem}

%%%%%%%%%%%%%%%%%%%%%%%%%%%%%%%%%%%%%%%%%%%%%%%%%%%%%%%%%%%%

Let us consider the general case of systems of the form
\beq \label{g1}
\partial_t u = {\cal L}(u) = A(u) + B(u) : \nabla_x u,
\eeq
where $x \in \rit^d$ and $u(t,x) \in \rit^m$.
In these equations, $A$ is an analytic function from $\rit^m$ to $\rit^m$
and 
$$
B(u) : \nabla_x u = \sum_{j=1}^d B_j(u) \, \partial_j u
$$
where $B_j$ are analytic functions from $\rit^m$ to $\rit^{m \times m}$.
Let $u_0(x)$ be an analytic stationary solution of (\ref{g1}), namely a solution of 
\beq \label{g3}
A(u_0) + B(u_0) : \nabla_x u_0 = 0.
\eeq
We are interested in the study of the nonlinear instability of (\ref{g1})
and more precisely we want to prove that the linear instability of (\ref{g1}) implies its nonlinear instability. By linear instability, we mean the existence of an exponentially
growing solution to the linearized system of (\ref{g1}).

In \cite{Grenier}, E. Grenier has developed a general methodology to construct nonlinear instabilities
starting from linear ones. His strategy relies on the construction of an approximate solution 
and on an energy estimate between the genuine solution and the approximate one.
Unfortunately, this strategy fails when there is no suitable energy estimate
or no existence theory for the system under study,
which is the case for the MHD system discussed in section $3$.

The construction of approximate solutions thus appears useless, and we have to directly construct
a genuine solution to the system. This genuine solution will be defined as an infinite series.
To ensure the convergence of this series, we need to work with analytic functions in order to overcome
the loss of derivatives coming from the nonlinearity, as will become clear later.
To this end, we will use generator functions \cite{Grenier2} in order to accurately control
the evolution of the radius of analyticity of our infinite series as time progresses.

Our method relies on two ingredients:

\begin{itemize}

\item a careful study of the resolvent of the linearized version of (\ref{g1}) in analytic spaces, 
based on the construction of its Green function, 
including cases where this Green function is singular near the boundary,

\item the construction of an exact solution through a series, whose convergence is controlled through
generator functions.

\end{itemize}

The use of Green functions to get bounds on the resolvent is new with respect to \cite{Grenier}.

%%%%%%%%%%%%%%%%%%%%%%%%%%%%%%%%%%%%%%%%%%%%%%%%

\subsection{Analytic spaces and generator functions}

%%%%%%%%%%%%%%%%%%%%%%%%%%%%%%%%%%%%%%%%%%%%%%%%

Let $\Omega \subset \mathbb{R}$.
For $u \in C^\infty(\Omega)$, we define the function $\G [u]$ on $\rho \ge 0$ by
$$
\G [u](\rho) = \sum_{\alpha \in \nit}  \| \partial_x^\alpha u \|_{L^\infty(\Omega)} {\rho^{\alpha} \over \alpha  !}.
$$
This definition can be extended to vector-valued functions by taking
the sum of the generator function of each component.

Note that the $L^\infty$ norm may be replaced by the norm of any Banach space.
For a sequence of functions $(v_n)_{n \ge 0}$, we define 
$$
\mathfrak{G} \Bigl[ (v_n)_{n \ge 0} \Bigr](\rho) = \sum_{n\ge 0} \| v_n \|_{L^\infty(\Omega)} {\rho^n \over n!}.
$$
We note that if $\G [u](\rho) < + \infty$, then $\G [u](\rho') < + \infty$ for any $0 \le \rho' \le \rho$.
Moreover $\G [u](0) = \| u \|_{L^\infty} < + \infty$. Hence there exists a largest real number $R(u) \ge 0$ such that
$\G [u](\rho) < + \infty$ if $0 \le \rho < R(u)$ and such that $\G [u](\rho)$ diverges if $\rho > R(u)$.
If $R(u) > 0$, we say that $u$ is analytic and the scalar $R(u)$ will be called the radius of analyticity of $u$.

We observe that if $u$ and $v$ are two functions of $L^\infty$ then
\beq \label{prop1}
\G[u+v](\rho) \le \G[u](\rho) + \G[v](\rho), 
\qquad 
\G [ uv] (\rho) \le \G[u](\rho) \, \G [v](\rho),
\eeq
and for any $\alpha$, we have
\beq \label{prop2}
\G [\partial_x^\alpha u] (\rho) = \partial_\rho^{\alpha} \G[u] (\rho).
\eeq
Moreover, $\mathcal{G}[u](\rho)$ is non decreasing and convex in $\rho$.
Let $F$ be an analytic function near $0$, then, locally, $F$ is the sum of its Taylor expansion
$$
F(v) = \sum_{n \ge 0} a_n v^n
$$
provided $|v| \le \rho_F$, where $\rho_F > 0$ is smaller than the radius of convergence of this series.
Then if $u \in L^\infty$,
\beq \label{prop3}
\G[ F(u)] (\rho) \le \sum_{n \ge 0} |a_n| \Bigl(\mathcal{G}[u](\rho) \Bigr)^n \le \| F \| \Bigl(\mathcal{G}[u](\rho) \Bigr),
\eeq
where $\| F \|$ is the function defined by
$$
\| F \| (\rho)  = \sum_{n \ge 0} |a_n| \, |\rho|^n.
$$
We note that $\| F \|$ converges provided $|\rho| \le \rho_F$.

%%%%%%%%%%%%%%%%%%%%%%%%%%%%%%

\subsection{Linear instability implies nonlinear instability}

%%%%%%%%%%%%%%%%%%%%%%%%%%%%%%

The linearized version of (\ref{g1}) around the equilibrium state $u_0$ with initial data $f_u$ reads
\beq \label{g5}
\partial_t u = Lu =  L_A \cdot u + L_B \cdot u
\eeq
with
$$
L_A \cdot u := \nabla_u A(u_0) \cdot u 
$$
and 
$$
L_B \cdot u := \Bigl[ \nabla_u B(u_0) \cdot u \Bigr] : \nabla_x u_0 + B(u_0) : \nabla_x u.
$$
We denote by $\tilde u$ the Laplace transform in time of the solution $u$ 
and by $\lambda$ the Laplace variable. This leads to the resolvent equation
\beq \label{g9}
( \lambda - L ) \tilde u = f_u,
\eeq
where $L = L_A + L_B$.
We denote by $\sigma$ the spectral abscissa of $L$, namely
\beq \label{definitionsigma}
\sigma = \sup_{\lambda \in Sp(L)} \Re \lambda.
\eeq
This leads to a first assumption.

\begin{assumption} The spectral abscissa $\sigma$ is finite and there exists an eigenvalue $\lambda_0$
with corresponding eigenvector $U_{\lambda_0}$ such that  
\beq \label{assumption1}
\Re \lambda_0 > {\sigma \over 2}
\eeq
and such that $\mathcal{G}[U_{\lambda_0}](\rho_0) < + \infty$ for some $\rho_0 > 0$.
\end{assumption}

We will also assume that, provided that $\Re \lambda$ is large enough, we can invert $\lambda - L$ 
in analytic spaces, which leads to the second assumption.

\begin{assumption}
If $\lambda \in \cit$ is such that $\Re \lambda \ge 2 \Re \lambda_0$, then $\lambda - L$ is
invertible and for any function $u$ and any $0 \le \rho \le \rho_0$ such that $\mathcal{G}[u](\rho) < + \infty$,
we have
\beq \label{assumption2}
\mathcal{G} \Bigl[ (\lambda - L)^{-1} u \Bigr](\rho) \le {C_0 \over | \lambda|} \mathcal{G}[u](\rho).
\eeq
\end{assumption}

The decay in $|\lambda|^{-1}$ of the inverse of the resolvent is classical and 
comes from the fact that, for large $\lambda$,
$(\lambda - L) \tilde u$ can be seen as a perturbation of $\lambda \tilde u$.
We now turn to the assumption on the nonlinearity $Q(u)$ defined by
$$
Q(u) = {\mathcal L}(u) - L u .
$$
We will assume that $Q$ is quadratic, with one derivative.

\begin{assumption} \label{assumptionQ}
We assume that the nonlinearity $Q(u)$ is quadratic, of the form $Q(u,v)$, with, for any $0 \le \rho \le \rho_0$
and for any functions $u$ and $v$,
$$
{\mathcal G}[Q(u,v)](\rho) \le C_Q \, {\mathcal G}[u](\rho) \, {\mathcal G}[\nabla_x v](\rho).
$$
\end{assumption}

Note that assumption $3$ is satisfied for instance if the nonlinearity is of the form 
$$
Q(u,v) = \widetilde Q(u,\nabla_x v)
$$
where $\widetilde Q$ is a quadratic form.
Our method can be extended to general nonlinearities $Q(u,v)$ which are sum of products of components of $\nabla_x v$ 
with holomorphic functions of $u$.

We now state our general instability result.

\begin{theorem} \label{linearnonlinear} (Linear instability implies nonlinear instability) \\
Let $u_0$ be an analytic stationary solution of (\ref{g1}). Then, if assumptions $1$, $2$ and $3$ hold true,
$u_0$ is nonlinearly unstable in the following sense: there exists $\eps_1 > 0$ and $\rho_1 > 0$ 
such that for any $\delta_1 > 0$
there exists a solution $u(t)$ to (\ref{g1}) on some time interval $[0,T]$ satisfying
\beq \label{insta1}
\mathcal{G}[ u(0) - u_0](\rho_1) \le \delta_1
\eeq
and
\beq \label{insta2}
\| u(T) - u_0 \|_{L^\infty} \ge \eps_1.
\eeq
% \beq \label{insta3}
% \| u(T) - u_0 \|_{L^\infty} \ge \eps_1, \qquad \| u(T) - u_0 \|_{L^1} \ge \eps_1.
% \eeq
\end{theorem}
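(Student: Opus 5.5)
We build an exact solution of the form $u = u_0 + v$ with $v = \sum_{p \ge 1} \eps^p v_p$, following the formal construction (\ref{formal})--(\ref{general5}). We set $v_1(t) = e^{\lambda_0 t} U_{\lambda_0} + \cc$ For $p \ge 2$, $v_p$ is the solution of
$$
\partial_t v_p - L v_p = \sum_{1 \le q \le p-1} Q(v_q, v_{p-q}), \qquad v_p(0) = 0 .
$$
We look for $v_p$ in the form $v_p(t,x) = \sum_{k} e^{\lambda_{p,k} t} w_{p,k}(x)$. Since $v_1$ involves only $e^{\lambda_0 t}$ and $e^{\bar\lambda_0 t}$, induction shows that $v_p$ is a finite sum of terms $e^{(j\lambda_0 + (p-j)\bar\lambda_0)t} w_{p,j}(x)$ with $0 \le j \le p$. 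The zero initial datum is dropped: the data at $t=0$ are defined to be $u(0) = u_0 + \sum_p \eps^p v_p(0)$.

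Each coefficient then solves
$$
(j\lambda_0 + (p-j)\bar\lambda_0 - L)\, w_{p,j} = \sum Q(w_{q,\cdot}, w_{p-q,\cdot}),
$$
and the real part of the spectral parameter is $p \Re\lambda_0 \ge 2\Re\lambda_0$ for $p \ge 2$. Assumption 2 therefore applies, giving
$$
\mathcal{G}[w_{p,j}](\rho) \le \frac{C_0}{p \Re \lambda_0} \sum \mathcal{G}[w_{q,\cdot}](\rho)\, \mathcal{G}[\nabla_x w_{p-q,\cdot}](\rho).
$$
Assumption 1 ($\Re\lambda_0>\sigma/2$) is what guarantees invertibility and the required bound. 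The gain $1/p$ is essential: it compensates the derivative loss in Assumption 3.

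The key step, and the main obstacle, is controlling the derivative loss. We introduce the generator-type majorants $a_p(\rho) = \sum_j \mathcal{G}[w_{p,j}](\rho)$. By (\ref{prop2}), $\mathcal{G}[\nabla w] = \partial_\rho \mathcal{G}[w]$, so
$$
a_p \le \frac{C}{p} \sum_{q=1}^{p-1} a_q \, \partial_\rho a_{p-q}.
$$
We compare with a majorant series $A(\rho,\tau) = \sum_p a_p(\rho) \tau^p$, with $\tau = \eps e^{\Re\lambda_0 t}$. Multiplying by $p$ turns the inequality into $\tau\partial_\tau A \lesssim \tau a_1 + C A\, \partial_\rho A$, a Burgers-type equation. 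Its solution exists and is analytic for $|\tau| \le \tau_0(\rho)$, with $\rho$ decreasing linearly as $\tau$ grows, starting from $\rho_0$; this can be checked via the method of characteristics, or by explicit induction $a_p(\rho) \le C^p \mathcal{G}[U_{\lambda_0}](\rho_0)\,(\rho_0-\rho)^{-(p-1)}$ times Catalan-type constants. Concretely, I expect $a_p(\rho_0/2) \le M K^{p}$ for constants $M,K$. Hence the series converges in $\mathcal{G}[\cdot](\rho_1)$, with $\rho_1=\rho_0/2$, whenever $\eps e^{\Re\lambda_0 t} \le 1/(2K)$.

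Uniform convergence of the series and of its $x$- and $t$-derivatives (again from the generator bounds) shows that $u$ is a genuine solution of (\ref{g1}) on $[0,T]$, where $T$ is defined by $\eps e^{\Re\lambda_0 T} = \tau_1$ with $\tau_1 \le 1/(2K)$ small. At $t=0$ we have $\mathcal{G}[u(0)-u_0](\rho_1) \le \sum_p \eps^p M K^p \le C\eps \le \delta_1$ once $\eps$ is small, which gives (\ref{insta1}). At time $T$,
$$
\|u(T)-u_0\|_{L^\infty} \ge \tau_1 \|e^{i \Im\lambda_0 T}U_{\lambda_0} + \cc\|_{L^\infty} - \sum_{p\ge2} M K^p \tau_1^p .
$$
This is bounded below by $\tau_1 c_0 - C\tau_1^2$, where $c_0>0$ up to choosing the phase of $U_{\lambda_0}$, since $U_{\lambda_0}$ and $\bar U_{\lambda_0}$ cannot cancel for all phases. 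Choosing $\tau_1$ small gives (\ref{insta2}) with $\eps_1 = c_0\tau_1/2$, independent of $\delta_1$.
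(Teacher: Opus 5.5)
Your proposal is correct in outline and is essentially the paper's proof. It uses the same two-frequency ansatz (your $w_{p,j}$ are the paper's $u_{n,2j-n}$ with $n=p$), the same recursive bound $a_p\le \frac{C}{p}\sum_q a_q\,\partial_\rho a_{p-q}$ from Assumptions 2 and 3 via $|\lambda|\ge p\Re\lambda_0$, and the same Hopf/Burgers-type majorant controlled on the shrinking range $\rho\le\rho_0-C_2\tau$. Your $\eps$-rescaling together with the choice of phase of $U_{\lambda_0}$ is equivalent to the paper's time translation $v(t-n)$, since a time shift is exactly an $\eps$-rescaling combined with a phase rotation.

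The one step you state only as an expectation, $a_p(\rho_0/2)\le MK^p$, is exactly what the paper proves. It does so by the characteristic argument with a continuity bootstrap on the truncated sums $F_N=\sum_{n\le N}G_n\tau^{n-1}$, so one never argues on a series not yet known to converge. Your induction route also closes, provided you bound the derivative by convexity, $\partial_\rho a_k(\rho)\le a_k(\rho')/(\rho'-\rho)$ with $\rho'-\rho=(\rho_0-\rho)/k$. The resulting factor $k$ is absorbed by the $1/p$ gain, which leaves a Catalan recursion.
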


\begin{remark}
Initially, $u - u_0$ is small in analytic spaces, and thus in any Sobolev spaces and in particular in $L^\infty$ (provided we are working in 
a bounded domain).
At time $T$, $u - u_0$ is large not only in analytic spaces, but also in all the Sobolev spaces and even
in $L^\infty$, which is the strongest possible instability: the initial perturbation is small in very
regular spaces and becomes large in very large spaces.
As will be clear in the proof, $T$ is of order $\log \delta_1^{-1}$.
\end{remark}

\begin{proof}
Let us look for a solution of the form $u = u_0 + v$.
We first formally construct a solution of the form
\beq \label{series}
v(t,x) = \sum_{n \ge 1} \sum_{p=-n}^n e^{n t \Re \lambda_0 + i p t \Im \lambda_0} u_{n,p}(x)
\eeq
and choose 
$$
u_{1,1}(x) = U_{\lambda_0}(x),
\quad u_{1,0} = 0,
\quad u_{1,-1} = \bar u_{1,1}.
$$
Note that the expansion (\ref{series}) is more complex than (\ref{formal}),
since in the introduction, for the sake of simplicity,
we restrict the presentation to the particular case where $\lambda$ is real. 

We will construct $u_{n,p}$ by induction.
As $Q$ is quadratic, we have
\beq \label{recu}
\Bigl[ ( n \Re \lambda_0 + i p \Im \lambda_0) - L \Bigr] u_{n,p}
= \sum_{1 \le n_1 \le n - 1} \sum_{p_1 = -n_1}^{n_1}
Q ( u_{n_1,p_1},u_{n-n_1,p-p_1}).
\eeq
We note that $u_{n,p} = 0$ if $|p| > n$.
Let
$$
G_n(\rho) = \sum_{p = -n}^n \G[u_{n,p}](\rho).
$$
Then, using assumptions $2$ and $3$,
\beq \label{ineq1}
G_n(\rho) \le {C \over n} \sum_{ 1 \le n' \le n-1} G_{n'}(\rho) \partial_\rho G_{n - n'}(\rho)
\eeq
for some positive constant $C$.
We now define
$$
F_N(\tau,\rho) = \sum_{1 \le n \le N} G_n(\rho) \tau^{n-1}.
$$
Multiplying (\ref{ineq1}) by $(n-1) \tau^{n-2}$, we obtain
$$
(n-1) \tau^{n-2} G_n(\rho) \le C \sum_{1 \le n' \le n-1} \tau^{n'-1} G_{n'}(\rho) \tau^{n - n'-1} \partial_\rho G_{n- n'}(\rho),
$$
thus
\beq \label{inequality}
\partial_\tau F_N \le C F_N \partial_\rho F_N.
\eeq
Hence $F_N$ satisfies an Hopf inequality that we will exploit in order to get bounds on $F_N$ which 
are uniform in $N$.

We note that $C_1 := 2 G_1(\rho_0) < + \infty$.
We will prove the following claim: provided $C_2$ is large enough, it holds that
\beq \label{claim}
F_N(\tau,\rho) \le C_1, \qquad \forall  0 \le \tau \le C_2^{-1} \rho_0, \qquad \forall  0 \le \rho \le \rho_0 - C_2 \tau.
\eeq
Let $\tau_N \le C_2^{-1} \rho_0$ be the largest time such that 
\beq \label{claim2}
F_N(\tau,\rho) \le C_1, \quad \forall 0 \le \tau \le \tau_N, \quad
\forall 0 \le \rho \le \rho_0 - C_2 \tau
\eeq
holds true. We have to prove that $\tau_N = C_2^{-1} \rho_0$.

First we note that $F_N(0,\rho) = G_1(\rho)$, thus $F_N(0,\rho) \le C_1 / 2$. 
By continuity, $F_N(\tau,\rho) \le C_1$ for any $0 \le \rho \le \rho_0$ provided $\tau$ is small enough,
thus $\tau_N > 0$.
We now introduce for $0 \le \rho \le \rho_0$,
$$
\widetilde F_N(\tau,\rho) = F_N \Bigl( \tau, \phi(\tau) \rho  \Bigr)
$$
where the function $\phi(\tau)$ will be chosen later.
We have
$$
\partial_\tau \widetilde F_N = \partial_\tau F_N + \phi'(\tau) \rho \partial_\rho F_N
$$
thus
\beq \label{Hopf}
\partial_\tau \widetilde F_N 
- \phi^{-1}(\tau)\Bigl[ \phi'(\tau) \rho + C \widetilde F_N \Bigr] \partial_\rho \widetilde F_N \le 0.
\eeq
Let us choose $\phi(\tau)$ such that $\phi'(\tau) = - 2 C \rho_0^{-1} C_1 < 0$ and $\phi(0) = 1$, namely
$$
\phi(\tau) = 1 - 2 C \rho_0^{-1} C_1 \tau.
$$
Let us consider $\widetilde F_N$ on $[0,\rho_0]$. Using (\ref{claim2}), 
the term between brackets is positive at $\rho = 0$ and negative at $\rho = \rho_0$. As a consequence,
the characteristics of  (\ref{Hopf}) are outgoing. Thus, using
the classical method of  characteristics, we see that $\widetilde F_N$ is bounded by
its initial value: $\widetilde F_N  \le C_1/2$.
We choose $C_2 = 2 C C_1$.
Then $F_N(\tau,\rho) \le C_1/2$ provided $0 \le \rho \le \rho_0 - C_2 \tau$.
If $\tau_N < C_2^{-1} \rho_0$, namely if $\phi(\tau_N) > 0$, by continuity (\ref{claim2}) is satisfied on a 
larger interval, which is contradictory. The claim is thus proved.

Now we let $N$ go to infinity, which gives
$$
F(\tau,\rho) \le C_1,   \qquad \forall 0 \le \tau \le {\rho_0 \over C_2}, \qquad \forall 0 \le \rho \le \rho_0 - C_2 \tau.
$$
Let us now prove that $v(t,x)$, given by (\ref{series}) is well defined. We have
\begin{align*}
\mathcal{G}[v](\rho) &\le \sum_{n \ge 1} e^{n t \Re \lambda_0} G_n(\rho)
\le e^{t \Re \lambda_0} F(e^{t \Re \lambda_0},\rho).
\end{align*}
Let us fix $\rho = \rho_0/2$. Then $v$ is well defined provided $e^{t \Re \lambda_0} \le (2 C_2)^{-1} \rho_0$,
namely provided
$$
t \le t^{\star} = {1 \over \Re \lambda_0} \log {\rho_0 \over 2 C_2}.
$$
Now, as $\kappa$ goes to $+\infty$, 
$$
v(t^\star - \kappa,x) \sim e^{(t^\star - \kappa) \lambda_0} u_{1,1}(x) + \cc,
$$
thus, there exists $\eps_1 > 0$ small enough
and $\kappa$ fixed, large enough, such that 
$$
\| v(t^{\star} - \kappa,\cdot) \|_{L^\infty} \ge \eps_1
$$
for some $\eps_1 > 0$ small enough.
Moreover, $\mathcal{G}[v(t)](\rho_0/2) < + \infty$ and goes to $0$ as $t$ goes to $-\infty$.
We now define 
$$
v_n(t,x) = v(t-n,x).
$$
We have $\mathcal{G}[v_n(0)](\rho_0/2) \to 0$ as $n \to + \infty$ and $\| v_n(t^{\star}-\kappa+n,\cdot) \|_{L^\infty} \ge \eps_1$,
which proves the Theorem.
\end{proof}

\begin{remark}
The proof can be extended to the case where $Q$ is not quadratic but contains higher order terms (cubic, quartic,...)
provided it involves only one derivative.
We then expand it as a power series.
The recurrence relation (\ref{recu}) is then more complex and involves cubic, quartic, and higher polynomials
in the various $u_{n,p}$. 
\end{remark}

\subsection{Resolvent and Green functions}

%%%%%%%%%%%%%%%%%%%%%%%%

We now detail how the use of Green functions allows us to prove the assumption $2$, 
a point that has not been investigated in \cite{Grenier2}. 
The following Lemma reduces the study of the resolvent equation to the study of the generator function of its Green function.

We first introduce a notation. We denote by $[\partial^n G(x)]$ the jump of $\partial^n G(x,y)$ at $x = y$,
namely
$$
[ \partial^n G(x)] = \partial_x^n G(x^+,x) - \partial_x^n G(x^-,x).
$$

\begin{lemma} \label{green}
Let $\Omega = \rit$.
Let $G(y,x)$ be the Green function of $\lambda - L$. Assume that $G$ satisfies
\beq \label{AAA0}
\sum_{n \ge 0} \Bigl[ \sup_{x \in \rit} \int_{\rit} | \partial_x^n G(y,x) | \, dy \Bigr] {\rho_G^n \over n!} \le C_0,
\eeq
\beq \label{AAA}
 \sum_{n \ge 0}  \sup_{x \in \rit}
 \Bigl| [\partial_x^n G(x) ] \Bigr| {\rho_G^n \over n!} \le C_0
\eeq
for some $\rho_G > 0$ and some $C_0 > 0$.
Then the solution $u(x)$ of
$$
(\lambda - L) u = f
$$
is defined by
\beq \label{defiu01}
u(x) = \int_\rit G(y,x) f(y) \, dy
\eeq
and satisfies
\beq \label{conclusion}
\| u \|_{L^\infty} + \mathcal{G}[\partial_x u](\rho) \le C_0 \mathcal{G}[f](\rho)
\eeq
for any $\rho \le \rho_G/2$ and any function $f$. 
\end{lemma}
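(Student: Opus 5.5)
The plan is to differentiate the representation (\ref{defiu01}) repeatedly in $x$. Each derivative produces a volume term involving $\partial_x^n G$ and, because $G(y,x)$ has a jump across the diagonal $y=x$, boundary terms involving the jumps $[\partial^k G(x)]$ multiplied by derivatives of $f$. The goal is to bound every term by products of the generator-type sums in (\ref{AAA0}) and (\ref{AAA}) with $\mathcal{G}[f](\rho)$. First, that $u$ given by (\ref{defiu01}) solves $(\lambda - L)u = f$ is the definition of the Green function. The $L^\infty$ bound is immediate from the $n=0$ term of (\ref{AAA0}): $\|u\|_{L^\infty} \le \sup_x \int |G(y,x)|\,dy \, \|f\|_{L^\infty}$.

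For derivatives, I would split the integral as $u(x) = \int_{-\infty}^x G(y,x) f(y)\,dy + \int_x^{\infty} G(y,x) f(y)\,dy$ and differentiate in $x$. Each differentiation of the moving endpoint produces the jump across the diagonal, $\pm G(x^\mp, x)f(x)$, which gives a jump of $G$ evaluated at the diagonal times $f(x)$. Iterating by induction, I expect a formula of Leibniz type:
$$
\partial_x^{n} u(x) = \int_\rit \partial_x^{n} G(y,x) f(y)\,dy + \sum_{k=0}^{n-1} \sum_{j} c_{n,k,j}\, \partial_x^{j}\Bigl([\partial^{k} G](x)\Bigr)\, \partial_x^{n-1-k-j} f(x).
$$
Here the diagonal jumps, viewed as functions of $x$, get differentiated further together with $f$. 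Since the lemma's hypothesis (\ref{AAA}) only controls the jumps themselves and not their $x$-derivatives, I would instead reorganize: write $\partial_x$ acting on $\int_{-\infty}^x$ as the total derivative, and note that the jump terms arising at step $m$ are of the form $[\partial^{k}G(x)]\,\partial^{j} f(x)$ with binomial-type coefficients. These can be arranged so that the sum over $n$ weighted by $\rho^{n-1}/(n-1)!$ is dominated by a Cauchy product of $\sum_k \sup_x |[\partial^k G(x)]| \rho^k/k!$ and $\mathcal{G}[f](\rho)$.

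The volume term is controlled by $\sup_x\int|\partial_x^nG|\,dy\,\|f\|_{L^\infty}$. Summing $\rho^{n-1}/(n-1)!$ against these coefficients costs a factor $n/\rho$ compared with $\rho^n/n!$. This is absorbed by passing from $\rho_G$ to $\rho\le\rho_G/2$, since $n(1/2)^{n}$ is summable, so $\sum_n n\, a_n \rho^{n-1}/n! \lesssim \rho_G^{-1}\sum_n a_n \rho_G^n/n!$. This is exactly why the conclusion (\ref{conclusion}) holds only for $\rho\le\rho_G/2$. Combining gives $\mathcal{G}[\partial_x u](\rho) \le C\, C_0\, \mathcal{G}[f](\rho)$, and after renaming constants this is (\ref{conclusion}).

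The main obstacle is the combinatorics of the boundary terms. I need to verify that the coefficients generated by repeated differentiation of the split integrals are bounded by binomial coefficients, so that the weighted sum factors as a product of generator functions as in (\ref{prop1}), rather than growing factorially. I would first check this by an explicit induction on $n$, tracking the coefficient of each term $[\partial^k G]\,\partial^{j}f$. If derivatives of the jumps genuinely appear, I would interpret (\ref{AAA}) as controlling them via $\partial_x[\partial^kG(x)] = [\partial^{k+1}G(x)] + [\partial_x\partial_yG]$-type identities, or simply accept a loss absorbed by the factor $1/2$ in the radius.
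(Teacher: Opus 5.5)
Your overall plan matches the paper's. You split the integral at $y=x$, write $\partial_x^{n+1}u$ as a volume integral plus the terms $\sum_{p\le n}\partial_x^{n-p}\bigl([\partial^pG]f\bigr)$, bound the volume part by \eqref{AAA0} using $\rho\le\rho_G/2$ to absorb the factor $n+1$, and bound the jump part by a Cauchy-product argument. The $L^\infty$ bound and the volume terms are fine.

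The jump terms, however, are a genuine gap. Your own (correct) Leibniz formula shows that $x$-derivatives of the jump functions $x\mapsto[\partial^kG(x)]$ occur; already $\partial_x^2u$ contains $\frac{d}{dx}\bigl([G(x)]\bigr)f(x)$. So the proposed ``reorganization'' into terms $[\partial^kG(x)]\,\partial^jf(x)$ with undifferentiated jumps is false. Your fallback identity $\frac{d}{dx}[\partial^kG(x)]=[\partial^{k+1}G(x)]+[\partial_y\partial_x^kG(x)]$ is correct. But the mixed-derivative jumps it produces are bounded by neither \eqref{AAA0} nor \eqref{AAA}, which involve only $x$-derivatives of $G$. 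Shrinking the radius cannot compensate for quantities the hypotheses do not control at all. Note also that in the paper the factor $1/2$ pays for a $2^q$ in the jump sum, not only for the volume term.

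In fact the gap cannot be closed from the stated hypotheses alone. Take $g=2+\phi$ with $\phi\in C_c^\infty(\mathbb{R})$, $\phi\not\equiv0$, $|\phi|\le1$. Then $G(y,x)=g(y)e^{-|x-y|}$ is the Green function of $\lambda-L=(2g)^{-1}(1-\partial_x^2)$. Since $x$-derivatives only hit the exponential, \eqref{AAA0} and \eqref{AAA} hold for every $\rho_G$. But for $f\equiv1$ one gets $u''=u-2g$. Finiteness of $\mathcal{G}[\partial_xu](\rho)$ for some $\rho>0$ would then make $g$, hence $\phi$, real-analytic on $\mathbb{R}$, which is impossible for a nonzero compactly supported function.

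The paper passes this point in two steps. First it bounds $\mathfrak{G}\bigl[(F_{p,n})_n\bigr](\rho)\le\frac{\rho^p}{p!}\mathcal{G}\bigl[[\partial^pG]\bigr](\rho)\,\mathcal{G}[f](\rho)$ via \eqref{prop1}. Then it silently replaces $\|\partial_x^n[\partial^pG]\|_{L^\infty}$ by $\|[\partial^{n+p}G]\|_{L^\infty}$, reindexes $q=n+p$, and uses $\sum_{p\le q}\binom{q}{p}=2^q$ to reach $\mathfrak{G}\bigl[([\partial^qG])_{q}\bigr](2\rho)$. That replacement is exactly the identification you were worried about. It amounts to reading \eqref{AAA} as an analyticity bound on the jump functions themselves.

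So the missing ingredient is an explicit hypothesis of that kind, for instance $\sum_{p,i\ge0}\sup_x\bigl|\partial_x^i[\partial^pG(x)]\bigr|\frac{\rho_G^{p+i}}{p!\,i!}\le C_0$. This holds when $L$ has analytic coefficients, as in the paper's applications. With it, $\sum_p\frac{\rho^p}{p!}\mathcal{G}\bigl[[\partial^pG]\bigr](\rho)\le C_0$ for $\rho\le\rho_G$, and your Cauchy-product plan closes.
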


\begin{proof}
First, we have
$$
\| u \|_{L^\infty} \le \| f \|_{L^\infty} \int_\rit | G(y,x) | \, dy
$$
which allows to bound $\| u \|_{L^\infty}$ using (\ref{AAA0}).
Differentiating the definition of $u$, we have
$$
\partial_x u(x) = \int_{-\infty}^x \partial_x G(y,x) f(y) \, dy + \int_x^{+\infty} \partial_x G(y,x) f(y) \, dy
+ [G(x)] f(x)
$$
where 
$$
[G(x)] = G(x^-,x) - G(x^+,x)
$$ 
is the difference between the limits of the left and right hand sides of $G(y,x)$ at $y = x$.
Iterating, we have
\begin{align*}
\partial_x^2 u(x) =& \int_{-\infty}^x \partial_{xx} G(y,x) f(y) \, dy + \int_x^{+\infty} \partial_{xx} G(y,x) f(y) \, dy
\\&
+ \partial_x \Bigl( [G(x)] f(x) \Bigr) + [\partial_x G(x)] f(x)
\end{align*}
where 
$$
[\partial_x G(x)] = \partial_x G(x^-,x) - \partial_x G(x^+,x).
$$
We define the sequences 
$U = (U_n)_{n \ge 0} := (\partial_x^{n+1} u(x))_{n \ge 0}$, $H = (H_n)_{n \ge 0}$ where
$$
H_n = \int_{-\infty}^x \partial_x^{n+1} G(y,x) f(y) \, dy + \int_x^{+\infty} \partial_x^{n+1} G(y,x) f(y) \, dy
$$
and, for $p \ge 0$, $F_p = (F_{p,n})$ where $F_{p,n} = 0$ if $p > n$ and
$$
F_{p,n}(x) = \partial_x^{n -p} \Bigl( [\partial_x^{p} G] f \Bigr)
$$
if $p \le n$.
We then have
$$
U = H + \sum_{p \ge 0} F_p,
$$
hence
$$
\mathfrak{G} \Bigl[ (U_n)_{n \ge 0} \Bigr](\rho) \le \mathfrak{G} \Bigl[ (H_n)_{n \ge 0} \Bigr](\rho) 
+ \sum_{p \ge 0} \mathfrak{G} \Bigl[ (F_{p,n})_{n \ge 0} \Bigr](\rho) .
$$
The first term can be estimated as 
$$
\mathfrak{G} \Bigl[ (H_n)_{n \ge 0} \Bigr](\rho) 
\le C \| f \|_{L^\infty} 
\mathfrak{G} \Bigl[ \Bigl\{ \sup_x \int_\rit |\partial_x^{n+1} G(y,x)| \, dy \Big\}_{n \ge 0} \Bigr](\rho)
$$ 
which can be bounded using (\ref{AAA0}).
Note that,  
\begin{align*}
\mathfrak{G} \Bigl[ \Bigl\{ \sup_x \int_\rit |\partial_x^{n+1} G(y,x)| \, dy \Big\}_{n \ge 0} \Bigr](\rho)
&= \sum_{n \ge 0} \sup_x \int_\rit |\partial_x^{n+1} G(y,x)| \, dy{\rho^n \over n!}
\\
&\le C\sum_{n \ge 0} \sup_x \int_\rit |\partial_x^{n+1} G(y,x)| \, dy
{\rho_G^{n+1} \over (n+1)!}
\end{align*}
for some positive constant $C$
provided $\rho \le \rho_G / 2$ since
$$
{\rho^n \over \rho_G^{n+1}} {(n+1)! \over n!} 
$$
is then uniformly in $n$.

For the second term, we have 
\begin{align*}
\mathfrak{G} \Bigl[ (F_{p,n})_{n \ge 0} \Bigr](\rho) 
&\le \sum_{n \ge p} \Bigl\| \partial_x^{n-p} ( [\partial_x^{p} G] f ) \Bigr\|_{L^\infty} {\rho^n \over n!}
\cr
&\le {\rho^p \over p!} \sum_{n \ge p} \Bigl\| \partial_x^{n-p} ( [\partial_x^{p} G] f ) \Bigr\|_{L^\infty} {\rho^{n-p} \over (n - p)!}
\\
&\le {\rho^p \over p!} \mathcal{G} \Bigl[ [\partial_x^{p} G] f \Bigr](\rho)
\le {\rho^p \over p!} \mathcal{G} \Bigl[ [\partial_x^{p} G] \Bigr] \, \mathcal{G}[f](\rho).
\end{align*} 
Moreover,
\begin{align*}
\sum_{p \ge 0} {\rho^p \over p!} \mathcal{G} \Bigl[ [\partial_x^{p} G] \Bigr]
&\le  \sum_{p \ge 0} \sum_{n \ge 0} {\rho^p \over p!} { \| [\partial_x^{p+n} G] \|_{L^\infty} \over n!} \rho^n
\\
&\le  \sum_{p \ge 0} \sum_{n \ge 0} { \| [\partial_x^{p+n} G] \|_{L^\infty} \over (n +p )!} \rho^{n+ p }
{ (n + p )! \over p! \, n!} 
\\
&\le   \sum_{q \ge 0} { \| [\partial_x^q G] \|_{L^\infty} \over q! } 
\rho^q \sum_{0 \le p \le q} {q! \over p! (q - p)!}
\\
&\le   \sum_{q \ge 0} { \| [\partial_x^q G] \|_{L^\infty} \over q!} 2^{q} \rho^q
\\
&\le  \,  {\mathfrak G} \Bigl[ [ \partial_x^n G]_{n \ge 0}  \Bigr] (2 \rho) 
\end{align*}
which is finite by assumption. This ends the proof with the set of assumptions (\ref{AAA0}) and (\ref{AAA}).
\end{proof}

\begin{remark}
If $L$ is a second order elliptic operator, then $[ G(x)] = 0$. In this case, the term $[ G(x)] f(x)$ vanishes and, in addition to (\ref{conclusion}), we have
$$
{\mathcal G}[\partial_x^2 u] (\rho) \le C {\mathcal G}[f](\rho)
$$
which translates the usual gain of two derivatives when we invert a one dimensional second order elliptic operator.
\end{remark}

We now turn to the case where $\lambda$ is large. 
The aim of this paragraph is to obtain the factor $\lambda^{-1}$ of Assumption $2$.
We will only consider a particular case, on the whole line $\rit$, when $L - \lambda$ is of the form
\beq \label{example2}
\partial_x^2 u  + q(x) u = \lambda u
\eeq
where $q(x)$ is a smooth analytic function. We assume that $q$ converges to some constants $q_\pm$ at $\pm \infty$.
The Green function $G$ is then the solution of
\beq \label{Glambda}
\partial_y^2 G + (q - \lambda) G = \delta_x
\eeq
which goes to $0$ at $\pm \infty$. We now explicitly construct this Green function $G$.

We first construct two solutions $\psi_\pm$ of 
$$
\partial_y^2 \psi_\pm + (q - \lambda) \psi_\pm = 0
$$
using the classical WKB solution.
Namely, we search for $\psi_\pm$ in the form
$$
\psi_\pm(x) = \exp\!\left( \int^x_{x_0} S_\pm(t)\,dt \right)
$$
where $x_0$ can be arbitrarily chosen,
which leads to the Riccati equation for $S_\pm$
$$
S_\pm' + S_\pm^2 = \lambda - q(x).
$$
It is classical to prove that 
$$
S_\pm(x) = \pm \sqrt{\lambda} \mp \frac{q(x)}{2 \sqrt{\lambda}} + O(\lambda^{-1}),
$$
which leads to 
$$
\psi_\pm(x) = \exp\!\left( \pm \sqrt{\lambda}\,x \mp \frac{1}{2 \sqrt{\lambda}}\int_{x_0}^x q(t)\,dt + O(\lambda^{-1}) \right)
= e^{\pm \sqrt{\lambda} x} \theta_\pm(\lambda,x).
$$
We note that the Wronskian $W$ between $\psi_+$ and $\psi_-$ equals $2 \sqrt{\lambda} + O(1)$.

The Green function $G$ is explicitly given by
$$
G (y,x) = {1 \over W} \left\{ \begin{array}{l} \psi_+(x) \psi_-(y) \quad \hbox{if } x < y, \cr
\psi_-(x) \psi_+(y) \quad \hbox{if } x > y.
\end{array} \right.
$$
We note that
$$
G (y,x) = {e^{- \sqrt{\lambda}| x - y|}  \over W} \theta_+(\lambda,x) \theta_-(\lambda,y)
$$
if $x < y$ and similarly if $x > y$.
Moreover, $\| G \|_{L^\infty}$ is small, of order $\lambda^{-1/2}$ since $W$ is of order $\sqrt{\lambda}$
in $L^\infty$. 
We now write, with $z = x - y$,
$$
\int_{-\infty}^{+\infty} G(y,x) f(y) \, dy 
= \int_{-\infty}^{+\infty} {e^{- \sqrt{\lambda} |z|} \over W} 
\theta_+(\lambda,x) \theta_-(\lambda, x - z) f(x-z) \, dz
$$
and repeat the proof of Lemma \ref{green} to get
$$
\| u \|_{L^\infty} + \mathcal{G}[\partial_x u](\rho) \le {C_0 \over |\lambda|} \mathcal{G}[f](\rho).
$$

%%%%%%%%%%%%%%%%%%%%%%%%%%%%%%%%%%%%%%%%

\subsection{Singularities and boundaries \label{secsingular}}

%%%%%%%%%%%%%%%%%%%%%%%%%%%%%%%%%%%%%%%%

The functions under study may be singular at the boundary of their definition domain. 
In this case, weights need to be introduced in the definition of the generator function.
We will focus on functions defined on $0 < x < 1$ that are singular at $x = 0$. The method can be extended
to functions that are singular both at $x = 0$ and $x = 1$ and to other possible domains $\Omega$.

Let us consider functions such that 
$$
|\partial_x^\alpha u| \lesssim x^{- \alpha}.
$$
For such functions, we introduce the modified generator function
$$
{\mathcal G}_{sing}[u](\rho) = \sum_{\alpha \ge 0} \| (x \partial_x)^\alpha u \|_{L^\infty} {\rho^{|\alpha|} \over |\alpha|!},
$$
for which we have (\ref{prop1}), (\ref{prop3}) and
\beq \label{prop2bis}
\mathcal{G}_{sing}[ x \partial_x u](\rho) = \partial_\rho \mathcal{G}_{sing}[u](\rho).
\eeq
We also have an analogue of Lemma \ref{green}, as stated below.

\begin{lemma} \label{greensingular}
Let $G(y,x)$ be a function defined for $x,y\in(0,1)$, analytic in $x$ for $x \neq y$ with respect to the operator $x\partial_x$, i.e.,
\beq \label{singular1}
\sum_{n \ge 0} \sup_{(0,1)} \int_0^1 \bigl| (x\partial_x)^n G(y,x) \bigr| \, dy \, \frac{\rho_G^n}{n!} \le C_0,
\eeq
\beq \label{AAA2}
 \sum_{n \ge 0}  \sup_{x \in (0,1)}
 \Bigl| [(x \partial_x)^n G(x) ] \Bigr| {\rho_G^n \over n!} \le C_0
\eeq
for some $\rho_G > 0$ and some constant $C_0$. Then the function $u$, defined by
$$
u(x) = \int_0^x G(y,x) f(y) \, dy + \int_x^1 G(y,x) f(y) \, dy,
$$
satisfies
$$
\| u \|_{L^\infty} + \mathcal{G}_{sing}[\partial_x u](\rho) \le C \mathcal{G}_{sing}[f](\rho)
$$
for any $0 \le \rho \le \rho_G/2$ and some constant $C$ independent of $\rho$ and $f$.
\end{lemma}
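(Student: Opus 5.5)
The plan is to follow the proof of Lemma \ref{green} line by line, with $\partial_x$ replaced by the Euler operator $D := x\partial_x$ and $\mathcal{G}$ replaced by $\mathcal{G}_{sing}$. Since $\mathcal{G}_{sing}$ is built from powers of $D$, the natural quantity to control is $x\partial_x u = Du$ rather than $\partial_x u$. So I read the conclusion as a bound on $\mathcal{G}_{sing}[x\partial_x u](\rho)=\partial_\rho \mathcal{G}_{sing}[u](\rho)$, consistent with (\ref{prop2bis}). The $L^\infty$ bound is immediate: $\|u\|_{L^\infty} \le \|f\|_{L^\infty}\sup_x\int_0^1|G(y,x)|\,dy$, and this is bounded by (\ref{singular1}) with $n=0$.

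\emph{Step 1: commuting $D$ with the split integral.} Apply $D$ to $u(x)=\int_0^x G f\,dy+\int_x^1 G f\,dy$. Differentiating the limits produces a boundary term $x\,[G(x)]f(x)$, where $[G(x)]$ is the jump at $y=x$. Because $x\le 1$, this is harmless in $L^\infty$. More importantly, it has the form $\widetilde{[G]}\, f$ with $\widetilde{[G]}(x) := x[G(x)]$, and $D^k$ of it can be expanded with Leibniz's rule for $D$. (It is simplest to absorb the factor $x$ into the jump; note that $D^k x = x$, so $\mathcal{G}_{sing}[x](\rho)=e^\rho$, which only changes constants.) Iterating, $D^{n+1}u$ splits into three pieces:
\begin{itemize}
\item the bulk term $H_n=\int_0^1 D_x^{n+1}G(y,x) f(y)\,dy$, with the integral split at $y=x$;
\item the terms $F_{p,n}=D^{n-p}\bigl(x[D^pG(x)] f\bigr)$ for $0\le p\le n$;
\item no further terms, because $D$ applied to $x[\cdot]f$ is handled by the Leibniz rule exactly as in Lemma \ref{green}.
\end{itemize}
This gives the sequence identity $U = H+\sum_p F_p$ with $U_n = D^{n+1}u$.

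\emph{Step 2: estimates.} For the bulk term, $\mathfrak{G}[(H_n)](\rho)\le \|f\|_{L^\infty}\sum_n \sup_x\int_0^1|D^{n+1}G|\,dy\,\rho^n/n!$. For $\rho\le\rho_G/2$ the ratio $(n+1)\rho^n/\rho_G^{n+1}$ is uniformly bounded, so (\ref{singular1}) bounds this term. For the jump terms, the same shift-and-binomial computation as in Lemma \ref{green} gives
\[
\sum_p \mathfrak{G}[F_p](\rho)\le \mathcal{G}_{sing}[f](\rho)\sum_q \|[D^qG]\|_{L^\infty}\frac{(2\rho)^q}{q!}\,e^{\rho}.
\]
This is finite by (\ref{AAA2}) for $2\rho\le\rho_G$. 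Here I use the product property (\ref{prop1}) for $\mathcal{G}_{sing}$, which holds because $D$ is a derivation.

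\emph{Main obstacle.} The delicate point is justifying the differentiation under the integral and the jump formula near the singular endpoint $x=0$, where $G$ and $f$ may blow up. I would handle it in two stages. First, work on $[\eta,1)$, where everything is smooth away from the diagonal. Then show that the bounds are uniform in $\eta$, since they involve only the weighted quantities in (\ref{singular1})–(\ref{AAA2}). Finally, let $\eta\to0$.

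A second subtlety is the interpretation of $[D^pG(x)]$. The jump must be taken after applying $D_x$ to $G(y,x)$ and before setting $y=x$, exactly as in the definition preceding Lemma \ref{green}. One must also check that $D^{n-p}$ acting on $x\mapsto [D^pG(x)]$ is what is controlled. For this I would expand $\mathcal{G}_{sing}[[D^pG]]$ as in the double sum of Lemma \ref{green}, interpreting $D^{k}[D^pG]$ as $[D^{p+k}G]$ plus lower-order jump contributions. These contributions are reabsorbed by the same binomial count, which is why the factor $2\rho$, and hence the restriction $\rho\le\rho_G/2$, appears.
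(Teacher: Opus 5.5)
\textbf{Where your proposal stands.} Your route is the paper's. The paper substitutes $s=\ln x$, so that $x\partial_x=\partial_s$ and your boundary factor $x$ becomes its weight $e^{s}$, and then reruns Lemma~\ref{green}. Your Step~1 identity, your estimate of the bulk terms $H_n$, and your reading of the conclusion as a bound on $\mathcal{G}_{sing}[x\partial_x u](\rho)$ all match the paper; that reading is in fact what the paper's proof yields after undoing the substitution. The endpoint $x=0$ is not where the difficulty lies.

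\textbf{The step that fails.} It is the one you flag last. Write $D=x\partial_x$ as in your proposal. The function $x\mapsto[D^pG(x)]$ is a trace on the diagonal. If $g$ denotes $D_x^pG$ restricted to $\{y<x\}$ or to $\{y>x\}$, then $D\bigl(g(x,x)\bigr)=\bigl((x\partial_x+y\partial_y)g\bigr)(x,x)$. So $D^k[D^pG]$ is the jump of $(x\partial_x+y\partial_y)^kD_x^pG$, not ``$[D^{p+k}G]$ plus lower-order jump contributions''. The $y\partial_y$ parts are not controlled at all by (\ref{singular1})--(\ref{AAA2}), which involve only $x$-derivatives. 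No binomial count can absorb them, so your bound on $\sum_p\mathfrak{G}[F_p](\rho)$ is unjustified.

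\textbf{The lemma is false as stated.} Take $G(y,x)=\mathbf{1}_{\{y<x\}}\phi(y)$ with $\phi(y)=|y-\tfrac12|$. For fixed $y$ it is piecewise constant in $x$, so $D_x^nG=0$ off the diagonal and $[D^nG]=0$ for $n\ge1$. Also $|[G(x)]|=|\phi(x)|\le\tfrac12$, so (\ref{singular1})--(\ref{AAA2}) hold with $C_0=\tfrac12$ and any $\rho_G$. Now take $f\equiv1$, so that $\mathcal{G}_{sing}[f](\rho)=1$. Then $u(x)=\int_0^x\phi$ and $x\partial_xu=x|x-\tfrac12|$, whose $D$-derivative jumps at $x=\tfrac12$. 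Hence $\mathcal{G}_{sing}[x\partial_xu](\rho)=+\infty$, and likewise $\mathcal{G}_{sing}[\partial_xu](\rho)=+\infty$, for every $\rho>0$.

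\textbf{The paper has the same hole.} The double-sum step of Lemma~\ref{green}, which it invokes, replaces $\|\partial^n[\partial^pG]\|_{L^\infty}$ by $\|[\partial^{p+n}G]\|_{L^\infty}$. This is harmless only when the jumps are independent of $x$, since then $D^k[D^pG]=0$ for $k\ge1$. An example is a kernel depending on $y/x$ alone, i.e.\ translation-invariant in $s$.

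\textbf{What is needed to close the argument.} You must add a hypothesis on the diagonal traces, for example
\[
\sum_{p,k\ge0}\ \sup_{x\in(0,1)}\Bigl|(x\partial_x)^k\bigl([(x\partial_x)^pG](x)\bigr)\Bigr|\,\frac{\rho_G^{p+k}}{p!\,k!}\le C_0 .
\]
This gives $\sum_p\frac{\rho^p}{p!}\,\mathcal{G}_{sing}\bigl[[D^pG]\bigr](\rho)\le C_0$ for $\rho\le\rho_G$ directly. With it, the rest of your computation goes through unchanged. Alternatively, assume $G$ is jointly analytic in $(y,x)$, with respect to $y\partial_y$ and $x\partial_x$, up to the diagonal from each side, and estimate $(x\partial_x+y\partial_y)^kD_x^pG$ there.
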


\begin{proof}
We make the change of variables $s = \ln x$, $t = \ln y$, so that $x = e^s$, $y = e^t$ with $s,t \in (-\infty,0]$. Then
$$
x\partial_x = \partial_s, \qquad (x\partial_x)^n G(y,x) = \partial_s^n \widetilde G(t,s),
$$
where $\widetilde G(t,s) = G(e^t, e^s)$. The generator becomes
$$
\mathcal{G}_{sing}[u](\rho) = \sum_{n \ge 0} \| \partial_s^n \widetilde u(s) \|_{L^\infty} \frac{\rho^n}{n!},
$$
where $\widetilde u(s) = u(e^s)$. The integral expression for $u$ transforms into
$$
\widetilde u(s) = \int_{-\infty}^{s} \widetilde G(t,s) \, \widetilde f(t) \, e^t dt + \int_{s}^{0} \widetilde G(t,s) \, \widetilde f(t) \, e^t dt,
$$
with $\widetilde f(t) = f(e^t)$. This formula is similar to (\ref{defiu01}), up to the exponential factor $e^t$
which is bounded by $1$ and integrable on $(-\infty,0)$.
We thus repeat the proof of Lemma~\ref{green}.
Thanks to the weight $e^t$, we have
$$
| H_n | \le \sup_{t \ne s} | \partial_s^{n+1} \widetilde G| \, \| \widetilde f\|_{L^\infty}
$$
thus the bounds on $H$ are similar.  Next, 
$$
F_{p,n} = \partial_s^{n-p} \Bigl( [\partial_s^p \widetilde G] \widetilde f e^s \Bigr), 
$$
and we can repeat the arguments of Lemma \ref{green} on $\widetilde f(t) e^t$ instead of $f(x)$. 
The computations are similar, except that we obtain
$$
\mathfrak{G} \Bigl[ (F_{p,n})_{n \ge 0} \Bigr] (\rho) \le {\rho^p \over p!}
\mathcal{G}\Bigl[ [\partial_t^p G] \Bigr] \mathcal{G}[\widetilde f e^t](\rho).
$$
However
$$
\mathcal{G}[\widetilde f e^t](\rho) \le \mathcal{G}[\widetilde f](\rho) \mathcal{G}[e^t](\rho)
$$
and $\mathcal{G}[e^t] < + \infty$.
Thus, repeating the proof of Lemma \ref{green}, we obtain 
$$
\sum_{n \ge 0} \| \partial_s^{n+1} \widetilde u \|_{L^\infty} \frac{\rho^n}{n!} 
\le C \sum_{n \ge 0} \| \partial_s^n \widetilde f \|_{L^\infty} \frac{\rho^n}{n!}
$$
for $0 \le \rho \le \rho_G/2$. The inverse change of variable, from $s$ to $x$ gives the desired estimate.
\end{proof}

\begin{remark}
When dealing with such singularities, the generator function $\mathcal{G}$ must be replaced
by $\mathcal{G}_{sing}$ in all the assumptions $1$ to $3$ of the Theorem \ref{linearnonlinear}.
\end{remark}

%%%%%%%%%%%%%%%%%%%%%%%%%%%%%%%%%%%%%%%%%%%%%%%%%%%%%%%%%%%%%%%%%%%%%%%%%%%%%%%%%%%%%%%%%%

\section{Application to the stability of plasma columns}

%%%%%%%%%%%%%%%%%%%%%%%%%%%%%%%%%%%%%%%%%%%%%%%%%%%%%%%%%%%%%%%%%%%%%%%%%%%%%%%%%%%%%%%%%%

%%%%%%%%%%%%%%%%%%

\subsection{Introduction}

%%%%%%%%%%%%%%%%%%

Let us consider the evolution of a plasma which is confined in a cylinder. 
We closely follow the presentation of J. P. Goedbloed and S. Poedts \cite{Goed}, and
in particular their chapter $9$.
We will assume that the plasma may be described by its density $\rho$, velocity $u$ and pressure $p$.
We will neglect the electric field and just focus on the magnetic field $B$, assuming moreover that
the speed of light is infinite.
We will also neglect the viscosity of the plasma and the diffusivity of the magnetic field.
Let $r_{ext} > 0$ and let
$$
\Omega_{cyl} = \Bigl\{ (x_1,x_2,z) \in \mathbb{R}^2
\times  \mathbb{T} \, \, | \, \,  
x_1^2 + x_2^2 \le r_{ext}^2 \Bigr\}
$$
be the containing cylinder. For simplicity, we assume periodic boundary conditions in $z$, but the infinite cylinder
$z \in \mathbb{R}$ can be treated similarly.

Let $\Omega(t) \subset \Omega_{cyl}$ be the subdomain occupied by the plasma
and let $\Omega^v(t) = \Omega_{cyl} - \Omega(t)$ be the ``vacuum" part
$$
\Omega(t) = \Bigl\{ \rho(t,x) > 0 \Bigr\},
\qquad
\Omega^v(t) = \Bigl\{ \rho(t,x) = 0 \Bigr\},
$$
where $\rho(t,x)$ is the density of the plasma.
We assume that $\Omega(t)$ is a small perturbation of $x_1^2 + x_2^2 = r_0^2$ with $r_0 < r_{ext}$.
We will also consider the case where $\Omega(t) = \Omega_{cyl}$. In this latter case, $r_0 = r_{ext}$ and there is no vacuum area.

In the domain $\Omega(t)$, the density $\rho$, the velocity $u$, the pressure $p$
and the magnetic field $B$ satisfy the classical ideal MHD equations \cite{Goed}
\beq \label{M1}
\partial_t \rho + \nabla \cdot (\rho u) = 0,
\eeq
\beq \label{M2}
\rho \Bigl( \partial_t u + (u \cdot \nabla ) u \Bigr) 
+ \nabla \Bigl( p + {1 \over 2} |B|^2 \Bigr) = (B \cdot \nabla) B,
\eeq
\beq \label{M3}
\partial_t B - \nabla \times (u \times B) = 0,
\eeq
\beq \label{M4}
\nabla \cdot B = 0.
\eeq
We assume that the pressure is a function
of the density only (so-called barotropic fluid) and focus on the isentropic case, namely, we assume that
\beq \label{pressure}
p(\rho) = A \rho^\gamma
\eeq
where $\gamma > 1$ is a given real number. 

%Moreover, (\ref{M4}) is propagated by (\ref{M3}) provided it is satisfied by the initial condition $B(0,x)$.
%The equations (\ref{M1})-(\ref{M3}) are then of ``hyperbolic" type.

In $\Omega^v(t)$, $\rho = 0$ by definition, $u$ is not defined
and the magnetic field $\widehat B$ satisfies
\beq \label{M5}
\nabla \cdot \widehat B = 0,
\eeq
\beq \label{M6}
\nabla \times \widehat B = 0.
\eeq
Let us now detail the boundary conditions (see \cite{Goed}, chapter $6.6$).
Assuming that $\partial \Omega_{cyl}$ is an ideal conductor, we have
\beq \label{M7}
n \cdot \widehat B = 0.
\eeq
% Moreover, further assuming that there is no current at the surface of the conductor, 
% \beq \label{M9bis}
% n \times \widehat B = 0.
% \eeq
% Thus, $\widehat B$ vanishes on the outer boundary $\partial \Omega_r$.
We now detail the conditions at the plasma-vacuum boundary
$$
\Sigma(t) = \overline{\Omega(t)} \cap \overline{\Omega^v(t)}.
$$
Let $n(x)$ be the normal to $\Sigma(t)$ at the point $x \in \Sigma(t)$.
%We refer to \cite{Goed} (section $4.5.2$) and \cite{Landau} for a discussion of these boundary conditions.
We have
\beq \label{M8}
B \cdot n = \widehat B \cdot n = 0,
\eeq
\beq \label{M9}
\Bigl[ \Bigl[ p + {|B|^2 \over 2} \Bigr] \Bigr] = 0
\eeq
where $[[\cdot]]$ is the jump at the boundary $\Sigma(t)$ with the convention
$$
[[B]] = B_{vacuum} - B_{plasma}.
$$
Note that the tangential component $B_t$ of the magnetic field may be discontinuous 
on $\Sigma(t)$.
This discontinuity is compensated by a surface current $j_\Sigma$ on $\Sigma(t)$ which satisfies
\beq \label{M9bis}
j_\Sigma = n \times [[ B ]].
\eeq
It remains to describe the dynamics of $\Sigma(t)$.
The normal velocity $\nu(\Sigma(t))$ of the interface $\Sigma(t)$ satisfies
\beq \label{M10}
\nu(\Sigma(t)) = u \cdot n.
\eeq
In this paper, we will refer to the set of equations 
(\ref{M1},\ref{M2},\ref{M3},\ref{M4},\ref{M7},\ref{M8},\ref{M9},\ref{M10}) 
as ``the MHD system".

\medskip

In this paper we prove that linear instability implies nonlinear instabilities for the MHD system
(see Theorems~\ref{nonl} and \ref{non2} for detailed statements). We refer to \cite{Goed} for a physical discussion
of the linear instabilities of plasma columns and to \cite{BGT1,BGT2} for a mathematical study of
these instabilities. For a comprehensive derivation of the ideal MHD equations and a detailed
discussion of plasma–vacuum interfaces, see the monographs \cite{Goed,Goedbloed2010}.
The classical linear stability analysis of cylindrical plasma columns, which leads to the
Hain–Lüst equation, is presented in \cite{HainLust1958} and further developed for free boundary
problems in \cite{Goedbloed1984}. The mathematical well‑posedness of the plasma–vacuum interface
problem has been the subject of intense investigation: the compressible case was for instance treated
in \cite{SecchiTrakhin2014,Morando2024} and the incompressible setting in
for instance \cite{HaoLuoMasmoudi2023}. We also refer to
\cite{Gu,Jiang1,Jiang2,HaoLuo,SecchiTrakhin2016,SecchiTrakhinWang2026,Xin,Xin2}. The present work
focuses on the nonlinear instability of stationary solutions.

%%%%%%%%%%%%%%%%%%%%%%%%%%%%%%%%%%%%%%%%%%%%%%%%%%%%%%%%%%%%%%%%%%%%%%%%

\subsection{Stationary solutions \label{stationary}}

%%%%%%%%%%%%%%%%%%%%%%%%%%%%%%%%%%%%%%%%%%%%%%%%%%%%%%%%%%%%%%%%%%%%%%%%

Following \cite{BGT1}, \cite{BGT2} and \cite{Goed} (in particular its Chapter $9$), 
for solutions with cylindrical symmetry, the MHD equations reduce to
$$
j \times B = \nabla p,
\qquad j = \nabla \times B,
\qquad
\nabla \cdot B = 0,
$$
namely, in cylindrical coordinates
$$
\partial_r p = j_\theta B_z - j_z B_\theta,
\qquad j_\theta = - \partial_r B_z,
\qquad j_z = {1 \over r} \partial_r (r B_\theta).
$$
Combining these identities, we obtain
\beq \label{cyl}
\partial_r \Bigl( p + {1 \over 2} ( B_\theta^2 + B_z^2) \Bigr)
+ {B_\theta^2 \over r} = 0.
\eeq
In particular, we can choose arbitrarily two of the three functions $B_\theta$,
$B_z$ and $p$ and define the last one through (\ref{cyl}), under the
natural constraint that $p$ must be non negative.
This leads to a large variety of possible stationary solutions of the
MHD equations.

At $r_0$, we must have the pressure balance
$$
p(r_0) + {1 \over 2} \Big[ B_\theta^2(r_0) + B_z^2(r_0) \Bigr]
= {1 \over 2} \Bigl[ \widehat B_{\theta}^2(r_0) + \widehat B_z^2(r_0) \Bigr].
$$
Note that the magnetic field may be discontinuous at $r = r_0$ because
of surface currents $j_\theta^s$ and $j_z^s$ given by
$$
j_\theta^s = B_z(r_0) - \widehat B_z(r_0),
\qquad
j_z^s = - B_\theta(r_0) + \widehat B_\theta(r_0).
$$
However, in this article we restrict ourselves to cases without surface currents.

\medskip

We now discuss two particular cases: the $\theta-$pinch configuration where $B_\theta =0$
and the $z-$pinch configuration where $B_z = 0$.

\begin{itemize}

\item $\theta-$pinch: in this case, $B_\theta = 0$ and thus
$$
p(r) + {1 \over 2} B_z^2(r) = C_0
$$
is a constant independent of $r$.
We can thus arbitrarily choose a constant $C_0$ and a smooth function $B_z$ 
such that $B_z^2(r_{0}) < 2 C_0$ and define $p$ accordingly.
Note that
$$
\rho(r) = A^{-1/\gamma} \Bigl( C_0 - {1 \over 2} B_z^2(r) \Bigr)^{1/\gamma}.
$$
More precisely, we will consider stationary solutions satisfying the following assumption

\begin{assumption} \label{assumptionAtheta}
We will assume that $B_z$ is analytic, namely
$$
{\mathcal G}[B_z](\rho_1) < + \infty
$$
for some positive $\rho_1$. We will also assume that $p$ does not
vanish on $0 \le r \le r_0$. We also have ${\mathcal G}[p](\rho_1) < + \infty$.
\end{assumption}

\item $z-$pinch: in this case, $B_z = 0$. 
The construction has been detailed in \cite{BGT1,BGT2}.
Let us describe the particular case where the current $j_z(r)$ inside the plasma is constant. 
If $j_z(r) = 2 c / r_{0}$, then
\beq \label{particular}
p(r) = c^2 \Bigl(1 - {r^2 \over r_{0}^2} \Bigr), \quad B_\theta(r) = c {r \over r_{0}},
\quad
\rho(r) =  \Bigl({c^2 \over A}\Bigr)^{1/\gamma} \Bigl( 1 - {r^2 \over r_{0}^2} \Bigr)^{1 / \gamma},
\eeq
and there is no surface current.

For this particular case, we have ${\mathcal G}[p](\rho_1) < + \infty$ for some $\rho_1 > 0$, and similarly
for $B_\theta$. However, $\rho$ is singular at $r_{0}$.
We will avoid this singularity by assuming that $p$ does not vanish and
study the stability of solutions which satisfy the following assumption.

\begin{assumption} \label{assumptionAZ}
We assume that $B_\theta$ is analytic, namely ${\mathcal G}[B_\theta](\rho_1) < + \infty$,
that $B_\theta$ vanishes at $r = 0$ and satisfies $\mathcal{G}[r^{-1} B_\theta](\rho_1) < + \infty$.
We assume that the pressure $p$ does not vanish on $0 \le r \le r_{0}$ and
satisfies ${\mathcal G}[p](\rho_1) < + \infty$. 
\end{assumption}

\end{itemize}

In this paper, we focus on the stability of these two classes of solutions.
We will discuss the particular example (\ref{particular}) in section \ref{secparticular}
to prove that the inviscid MHD system is not locally well-posed near this particular stationary solution.

Under assumptions \ref{assumptionAtheta} and \ref{assumptionAZ}, the local in time existence of smooth solutions
is a consequence of the results of \cite{SecchiTrakhin2014} and \cite{SecchiTrakhin2016}, but we do not need
these results to obtain nonlinear instability. The construction of the nonlinear instability is independent
from the existence theory of smooth solutions.

%%%%%%%%%%%%%%%%%%%%%%%%%%%%%

\subsection{The Lagrangian coordinates}

%%%%%%%%%%%%%%%%%%%%%%%%%%%%%

Let us first define the Lagrangian coordinates inside the plasma.
Let $h(t,\X)$ be the position at time $t$ of a plasma particle $\X$,
defined by
$$
{d \over dt} h(t,\X) = u \Bigl( t,h(t,\X) \Bigr), 
\qquad t > 0, \quad \X \in \overline{\Omega},
$$
with initial data 
$$
h(0,\X) = \X + \xi_0(\X)
$$
where $\xi_0$ is the initial displacement.
We define the displacement $\xi(t,\X)$ by
$$
\xi(t,\X) = h(t,\X) - \X.
$$
In Lagrangian coordinates, the plasma fills $\Omega_0 = \{ r \le r_0 \}$ and the interface
$\Sigma_0 = \{ r = r_0 \}$ is fixed.
In order to simplify the notation, we still denote by $\rho$, $u$, $p$ and $B$ the
density, velocity, pressure and magnetic field in Lagrangian coordinates.

We  define the geometric quantities
$$
\mathcal{A} = (D h)^{-1},
\qquad 
J = \det(Dh)
$$
and the nabla operator in Lagrangian coordinates
$$
(\nabla_{\A})_i = \A_i^j \partial_j.
$$
Direct computations show that
$$
\A_i^k \partial_k h^j = \A_k^j \partial_i h^k = \delta_i^j,
\qquad\partial_k (J \A_i^k) = 0,
$$
$$
\partial_i h^j = \delta_i^j + \partial_i \xi^j,
\qquad
\A_i^j = \delta_i^j - \A_i^k \partial_k \xi^j.
$$
In particular, we have
\beq \label{geo1}
\partial_t J = J \A_i^j \partial_j v^i,
\eeq
\beq \label{geo2}
\partial_t \A_i^j = - \A_k^j \A_i^l \partial_l v^k .
\eeq
In $\Omega_0$, 
the plasma equations in Lagrangian coordinates are then
\beq \label{L1}
\partial_t \xi = v,
\eeq
\beq \label{L2}
\rho \partial_t v + \nabla_\A \Bigl( p + {1 \over 2} |B|^2 \Bigr) = (B \cdot \nabla_\A) B,
\eeq
\beq \label{L3}
\partial_t \rho + \rho \nabla_\A \cdot v = 0,
\eeq
\beq \label{L4}
\partial_t B + B \nabla_\A \cdot v = (B \cdot \nabla_\A) v,
\eeq
\beq \label{L5}
\nabla_\A \cdot B = 0.
\eeq
On the interface $\Sigma_0$, we have the jump conditions
\beq \label{L6}
 B \cdot n = \widehat B \cdot n = 0, 
\eeq
\beq \label{L7}
\Bigl[ \Bigl[ p  + {|B|^2 \over 2} \Bigr] \Bigr] = 0.
\eeq

%%%%%%%%%%%%%%%%%%%%%%%%%%%%%%%%%%%%%%%%%%%%%%%%%%%%%%%%%%%%%%%%%%%%%%%%%%%%%

\subsection{Study of the linearized system}

%%%%%%%%%%%%%%%%%%%%%%%%%%%%%%%%%%%%%%%%%%%%%%%%%%%%%%%%%%%%%%%%%%%%%%%%%%%%%

%%%%%%%%%%%%%%%%%%%%%%%%%%

\subsubsection{Linearization }

%%%%%%%%%%%%%%%%%%%%%%%%%%

We consider a stationary solution  $(\rho_0,v_0,p_0,B_0)$ of the MHD system with $v_0 = 0$. 
Following \cite{BGT1,BGT2} and \cite{Goed} (paragraph $6.1.12$), the linearized MHD equations
in $\Omega_0$ are
\beq \label{LMHD1}
\rho_0 \partial_t v_1 = - \nabla_x p_1 + j_1 \times B_0 + j_0 \times B_1,
\eeq
\beq \label{LMHD2}
\partial_t p_1 = - v_1 \cdot \nabla_x p_0 - \gamma p_0 \nabla_x \cdot v_1,
\eeq
\beq \label{LMHD3}
\partial_t B_1 = - \nabla_x \times (v_1 \times B_0).
\eeq
% \beq \label{LMHD4}
% \partial_t \rho_1 = - \nabla_x \cdot (\rho_0 v_1).
% \eeq
Let $\xi(t,x)$ be the displacement, then at leading order we have
$$ 
v_1 = \partial_t \xi_1.
$$
The system (\ref{LMHD2},\ref{LMHD3}) leads to
\beq \label{LMHD2b}
\partial_t \Bigl[ p_1 + \xi_1 \cdot \nabla_x p_0 + \gamma p_0 \nabla_x \cdot \xi_1 \Bigr] = 0,
\eeq
\beq \label{LMHD3b}
\partial_t \Bigl[ B_1 + \nabla_x \times (\xi_1 \times B_0) \Bigr] = 0.
\eeq
% \beq \label{LMHD4b}
% \partial_t \Bigl[ \rho_1 + \nabla_x \cdot (\rho_0 \xi_1) \Bigr] = 0. 
% \eeq
These last two equations allow us to directly express $p_1$ and $B_1$ in terms of $\xi_1$ through 
\beq \label{LMHD2c}
p_1 = - \xi_1 \cdot \nabla_x p_0 - \gamma p_0 \nabla_x \cdot \xi_1 + \delta p_1,
\eeq
\beq \label{LMHD3c}
B_1 = - \nabla_x \times (\xi_1 \times B_0) + \delta B_1,
\eeq
%\beq \label{LMHD4c}
%\rho_1 = - \nabla_x \cdot (\rho_0 \xi_1) + \delta \rho_1,
%\eeq
where $\delta p_1$ and $\delta B_1$ are such that these identities hold
at $t = 0$ (and then they hold for all times). Namely,
$$
\delta p_1 = p_1(0) + \xi_1(0) \cdot \nabla_x p_0 + \gamma p_0 \nabla_x \cdot \xi_1(0),
$$
$$
\delta B_1 = B_1(0) + \nabla_x \times (\xi_1(0) \times B_0).
$$
%$$
%\delta \rho_1 = \rho_1(0) + \nabla_x \Bigl( \rho_0 \xi_1(0) \Bigr).
%$$
Finally, (\ref{LMHD1}) may be rewritten in the form
\beq \label{LMHDsimple}
\rho_0 \partial_t^2 \xi_1 = F(\xi_1)  + F_0
\eeq
where
\beln \label{defiF}
F(\xi_1) &=  \nabla_x \Bigl[ \xi_1 \cdot \nabla_x p_0 + \gamma p_0 \nabla_x \cdot \xi_1 \Bigr] 
+ B_0 \times \Bigl(\nabla_x \times \nabla_x \times (\xi_1 \times B_0) \Bigr) 
\\
&\quad - (\nabla_x \times B_0) \times \nabla_x \times  (\xi_1 \times B_0)
\eeln
% \beq \label{defiF}
% =  \nabla_x \Bigl[ \gamma p_0 \nabla_x \cdot \xi_1 \Bigr]
% + \nabla_x \Bigl[ \xi_1 \cdot \nabla_x p_0 \Bigr]
% - (\nabla_x \times B_0) \times \nabla_x (\xi_1 \times B_0)
% \eeq
and
$$
F_0 = - \nabla_x \delta p_1 - B_0 \times (\nabla_x \times \delta B_1)
+ ( \nabla_x \times B_0) \times \delta B_1.
$$
We note that $F(\xi_1)$ is a second order differential operator in $\xi_1$. 
Moreover $F_0$ is a linear operator in the initial conditions, which is of first order in $p_1$ and $B_1$, 
and of second order in $\xi_1$.
Note that $\rho^{-1} F$ is self-adjoint, as proved in \cite{Goed} (section $6.2.3$).

%%%%%%%%%%%%%%%%%%%%%%%%%%%%%

\subsubsection{The Hain-L\"ust equation}

%%%%%%%%%%%%%%%%%%%%%%%%%%%%%

We now detail (\ref{LMHDsimple}) by computing it explicitly in a suitable frame, following \cite{Goed} and in particular its chapter $9$. 
We define a local frame by
$$
e_r, \qquad
e_\perp = {(0,B_z, - B_\theta) \over | B|},
\qquad
e_\parallel = {(0,B_\theta,B_z) \over |B|}.
$$
We turn to cylindrical coordinates, take the Fourier
transform in $z$ and $\theta$, and the Fourier
transform in time, which leads to looking for solutions
of the form
$$
\xi_1(r,\theta,z,t) = \Bigl(
\xi_r(r),\xi_\theta(r),\xi_z(r) \Bigr) e^{i (m \theta + k z - \omega t)}.
$$
Note that $\lambda = - i \omega$. In these coordinates,
$$
\nabla = e_r \partial_r 
+ i g e_\perp + i f e_\parallel,
$$
with
$$
g = {1 \over |B|} \Bigl(  {m B_z \over r} - k B_\theta \Bigr) = {G \over |B|}, \qquad
f = {1 \over |B|} \Bigl(  {m B_\theta \over r} + k B_z \Bigr) = {F \over |B|}.
$$
We now define $(\xi_0,\eta_0,\zeta_0)$ as the projections of $\xi_1$ in these new coordinates, which gives
\bel
\xi_0 &= e_r \cdot \xi_1 = \xi_r, \\
\eta_0 &= i e_\perp \cdot \xi_1 = i {B_z \xi_\theta - B_\theta \xi_z \over |B|}, \\
\zeta_0 &= i e_\parallel \cdot \xi_1 = i {B_\theta \xi_\theta + B_z \xi_z \over |B|}.
\eel
%Next, we have
% $$
% Q = i f |B| \xi_0 e_r - \Bigl[(B_\theta \xi_0)' - k |B| \eta_0 \Bigr] e_\theta 
% - {1 \over r} \Bigl[ (r B_z \xi_0)' + m |B| \eta_0 \Bigr] e_z,
% $$
%$$
%p_1 = - p' \xi_1 - \gamma p \nabla \cdot \xi_1,
%\qquad 
%\nabla \cdot \xi_1 = {1 \over r}
%(r \xi_1)' + g \eta + f \zeta,
%$$
%where we removed the multiplicative factor $\exp[ i (m\theta + k z - \omega t)]$ on the right hand side
%of the two previous identities in order to simplify the reading.
This equation (\ref{LMHDsimple}) transforms into
\beq \label{mat1}
M \left( \begin{array}{c} 
\xi_0 \cr \eta_0 \cr \zeta_0 \end{array} \right) 
= - \rho \omega^2 \left( \begin{array}{c} \xi_0 \cr \eta_0 \cr \zeta_0 \end{array} \right) + \widetilde F_0
\eeq
where
$$
M = \left( \begin{array}{ccc}
\partial_r {\gamma p + |B|^2 \over r} \partial_r r
- f^2 |B|^2 - r \Bigl( {B_\theta^2 \over r^2} \Bigr)'
& 
\partial_r g (\gamma p + |B|^2) - {2 k B_\theta |B| \over r}
& \partial_r f \gamma p \cr
- {g (\gamma p + |B|^2) \over r} \partial_r r
 - {2 k B_\theta |B| \over r} &
 - g^2 (\gamma p + |B|^2) - f^2 |B|^2
 & - f g \gamma p \cr
 -{f \gamma p \over r} \partial_r r
 & - f g \gamma p & - f^2 \gamma p \cr
 \end{array} \right) 
$$  
and where $\widetilde F_0$ is the expression of $F_0$ in the new coordinates.
We refer to \cite{Goed} (chapter $9$) for the detailed computations. In the matrix $M$, to avoid multiple parentheses,
the operator $\partial_r$ acts on the whole expression on its right, up to the minus sign. 
For instance $\partial_r r$ acting on a function $\phi$ gives $\partial_r ( r \phi)$.

We now reduce (\ref{mat1}) to a single scalar equation on 
$$
\chi = r \xi_0
$$
only.
To this end, we note that we can express $\eta_0$ and $\zeta_0$ in terms of $\xi_0$ by inverting
the lower $2 \times 2$ submatrix $M_1$ of $M$ which contains no derivatives in $r$, 
% The inverse of $M_1$
% is
% $$
% M_1^{-1} =
% \begin{pmatrix}
% -\dfrac{1}{B^2\,(f^2+g^2)} & \dfrac{g}{f B^2\,(f^2+g^2)} \$$10pt]
% \dfrac{g}{f B^2\,(f^2+g^2)} & -\dfrac{1}{f^2 \gamma p} - \dfrac{g^2}{f^2 B^2\,(f^2+g^2)}
% \end{pmatrix},
% $$
which after some algebra leads to
\begin{align*}
\eta_0 &= {G [ (\gamma p + |B|^2) \rho \omega^2 - \gamma p F^2]
r \chi' + 2 k B_\theta (|B|^2 \rho \omega^2 - \gamma p F^2) \chi
\over r^2 |B|D},
\\
\zeta_0 &= { \gamma p F [ (\rho \omega^2 - F^2) r \chi'
+ 2 k B_\theta G \chi] \over r^2 |B|D},
\end{align*}
where
$$
D = \rho^2 \omega^4 - \Bigl( {m^2 \over r^2} + k^2 \Bigr)
\Bigl( (\gamma p + |B|^2) \rho \omega^2 -  \gamma p F^2 \Bigr) .
$$
It remains to write the equation on $\chi$.
Let us introduce
$$
N = (\rho \omega^2 - F^2) \Bigl( (\gamma p + |B|^2) \rho \omega^2 - \gamma p F^2 \Bigr).
$$
Inserting the previous expressions of $\eta_0$ and $\zeta_0$ into (\ref{mat1}) and after a long computation 
we obtain the classical Hain-L\"ust equation
\begin{align*} 
%{\mathcal L} \chi := 
\partial_r \Bigl[ {N \over r D} \partial_r \chi \Bigr]
& + \Bigl[ {1 \over r} (\rho \omega^2 - F^2) - \partial_r \Bigl( {B_\theta^2 \over r^2} \Bigr)
 - {4 k^2 \over r^3} {B_\theta^2 \over D} (|B|^2 \rho \omega^2 - \gamma p F^2)
\\
& + \partial_r \Bigl( {2 k \over r^2} {B_\theta G \over D} 
\Bigl( (\gamma p + |B|^2) \rho \omega^2 - \gamma p F^2 \Bigr) \Bigr) \Bigr]
\chi = 0,
\end{align*}
which is a singular second order differential equation (singular at $r = 0$).

%%%%%%%%%%%%%%%%%%%%%

\subsubsection{Boundary conditions \label{boundaryconditions}}

%%%%%%%%%%%%%%%%%%%%%

We follow \cite{Goed} (section $9.2.2$).
First, at $r = 0$, the boundary condition is 
\beq \label{condb1}
\chi(0) = 0.
\eeq
If the plasma fills the full domain (no vacuum), the boundary condition is simply 
\beq \label{condb2}
\chi(r_{ext}) = 0.
\eeq
If there is a vacuum, namely if $r_0 < r_{ext}$, 
the boundary condition at $r_{ext}$ is much more complicated.  
Let us introduce
$$
\Pi = -{N \over r D} \chi'
+ \Bigl[ {2 B_\theta^2 \over r^2} - {2 k B_\theta G \over r^2 D}
\Bigl( (\gamma p + B^2) \rho \omega^2 - \gamma p F^2 \Bigr) \Bigr] \chi
$$
and
$$
\widehat F = {m \widehat B_{\theta} \over r_0} + k \widehat B_{z}.
$$
We also introduce the modified Bessel functions $I_m$ and $K_m$.
Then, according to \cite{Goed}, the boundary condition at $r_0$ is (for $k \ne 0$)
\beq \label{condb3}
{\Pi(r_0) \over \chi(r_0)} =
{B_\theta^2(r_0) - \widehat B_\theta^2(r_0) \over r_0^2}
- {\widehat F^2(r_0) \over k r_0}
{I_m(k r_0) K_m'(k r_{ext}) - K_m(k r_0) I_m'(k r_{ext}) 
\over I_m'(k r_0) K_m'(k r_{ext}) - K_m'(k r_0) I_m'(k r_{ext})}.
\eeq
This boundary condition is a linear relation between $\chi(r_0)$ and $\chi'(r_0)$
of the form
$$
C(k,m) \chi(r_0) + C'(k,m)  \chi'(r_0) = 0
$$
for some explicit but complicated coefficients $C(k,m)$ and $C'(k,m)$ which may be chosen such that
$$
C(k,m)^2 + C'(k,m)^2 = 1.
$$

%%%%%%%%%%%%%%%%%%%%%%%%%

\subsection{Study of the $\theta-$pinch}

%%%%%%%%%%%%%%%%%%%%%%%%%

Let us consider a $\theta-$pinch configuration which satisfies the assumptions detailed in section \ref{stationary}.
In this section, we construct the Green function $G$ of the Hain-Lüst equation (\ref{mat1}) 
for such $\theta-$pinch configurations and then prove that linear instability implies nonlinear instabilities.
The main difficulty is to investigate the singularity of $G$ as $r_1$ goes to $0$ and to $r_0$.
We first focus on the case where $\lambda$ is bounded and then study the case of large $\lambda$.

%%%%%%%%%%%%%%%%

\subsubsection{Green function of the Hain-L\"ust equation}

%%%%%%%%%%%%%%%%

\begin{lemma} \label{theta} Let $\lambda = i \omega$ be fixed.
In the $\theta-$pinch configuration, if 
$$
\gamma p F^2(0) \ne \Bigl( \gamma p(0) + B^2(0) \Bigr) \rho(0) \omega^2
$$ 
and if 
$$
\rho(0) \omega^2 - F^2(0) \ne 0,$$
then the Green function $G(r_1,r)$ of the Hain-Lüst equation satisfies, for $r_1$ and $r$ different 
from $0$ and $r_0$ and for any $n \ge 0$
\beq \label{BB}
| \partial_r^n G(r_1,r) | \le C K^n n! \,  \Bigl[ 1_{r_1 < r_0/2} r^{\max(|m|-n,0)} r_1^{-|m|} 
+ 1_{r_1 > r_0/2} (r_0 - r_1)^{2 - n} \Bigr]
\eeq
if $r < r_1$ and
\beq \label{BB0}
| \partial_r^n G(r_1,r) | \le C K^n n! \,  \Bigl[1_{r_1 < r_0/2} r_1^{|m|} r^{-|m| - n} 
+ 1_{r_1 > r_0/2} (r_0 - r_1)^{2 - n} \Bigr]
\eeq
if $r > r_1$ where $C$ and $K$ are positive constants.
\end{lemma}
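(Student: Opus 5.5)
The plan is to write the Green function explicitly in terms of two homogeneous solutions and then bound derivatives by Cauchy estimates in complex neighbourhoods of the real points. In the $\theta-$pinch, $B_\theta = 0$, so $F = k B_z$ and $G = m B_z / r$. All the $B_\theta$ terms drop out and the Hain-Lüst equation reduces to
$$
\partial_r \Bigl[ P(r) \, \partial_r \chi \Bigr] + Q(r) \chi = 0,
\qquad P = {N \over r D}, \qquad Q = {1 \over r} (\rho \omega^2 - F^2).
$$
By Assumption \ref{assumptionAtheta}, $p$, $\rho$ and $B_z$ are analytic. Hence $N$ and $r^2 D$ are analytic functions of $r$ (of $r^2$ near $0$). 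The two nondegeneracy hypotheses of the Lemma are exactly what is needed for this:
\begin{itemize}
\item $r^2 D(0) = - m^2 \bigl( (\gamma p + B^2)\rho\omega^2 - \gamma p F^2 \bigr)(0) \ne 0$, and $N(0) \ne 0$;
\item consequently $P(r) = r \, a(r)$ near $0$, with $a$ analytic and $a(0) \ne 0$.
\end{itemize}
The equation then has a regular singular point at $r = 0$. Its indicial equation reduces to that of $(r \chi')' - m^2 \chi / r = 0$, with exponents $\pm |m|$. Away from $0$ I will assume, as part of the fixed $\lambda$ not lying in the continuous spectrum, that $N$ and $D$ do not vanish on $(0,r_0]$. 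The coefficients are then analytic in a complex strip around $(0,r_0]$.

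First I construct the two homogeneous solutions. By Frobenius, $\psi_-(r) = r^{|m|} \alpha(r)$ with $\alpha$ analytic near $0$ and $\alpha(0) = 1$; this is the solution satisfying (\ref{condb1}). I then continue $\psi_-$ analytically up to $r_0$. Next, $\psi_+$ is the solution satisfying the boundary condition at $r_0$, either (\ref{condb2}) or the relation $C \chi(r_0) + C' \chi'(r_0) = 0$. Near $0$ it behaves as $c_1 r^{-|m|} \beta(r) + c_2 \psi_-$. Abel's identity gives $P \, W(\psi_-,\psi_+) = \kappa$ constant, and $\kappa \ne 0$ because $\lambda$ is not an eigenvalue (the resolvent exists). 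Then
$$
G(r_1,r) = \kappa^{-1} \psi_-(\min(r,r_1)) \, \psi_+(\max(r,r_1)).
$$
Away from $0$ and $r_0$ all factors are bounded analytic functions. The $n!K^n$ growth then follows from Cauchy estimates on disks of fixed radius.

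Near $r = 0$ I use Cauchy estimates on disks of radius $c r$. On such a disk $|\psi_-| \lesssim r^{|m|}$ and $|\psi_+| \lesssim r^{-|m|}$, which gives
$$
|\partial_r^n r^{\pm|m|}(\cdots)| \le C K^n n! \, r^{\pm|m| - n}.
$$
For $\psi_-$ one can do better: $r^{|m|}\alpha(r)$ is a convergent power series, and differentiating it $n$ times leaves the factor $r^{\max(|m|-n,0)}$. With $r_1 < r_0/2$ this yields the first terms of (\ref{BB}) and (\ref{BB0}):
\begin{itemize}
\item for $r < r_1$, the factor $r^{\max(|m|-n,0)}$ from $\partial_r^n \psi_-(r)$ and $r_1^{-|m|}$ from $\psi_+(r_1)$;
\item for $r > r_1$, the factor $r_1^{|m|}$ from $\psi_-(r_1)$ and $r^{-|m|-n}$ from $\partial_r^n \psi_+(r)$.
\end{itemize}

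The main obstacle is the regime $r_1 > r_0/2$, with the weight $(r_0 - r_1)^{2-n}$. This means the normalisation and behaviour of $\psi_+$ near $r_0$ must be tracked through the boundary condition. Since $C^2 + C'^2 = 1$, I would expand $\psi_+$ around $r_0$ as
$$
\psi_+(r) = -C' + C (r - r_0) + O\bigl( (r - r_0)^2 \bigr).
$$
I would then apply Cauchy estimates on disks of radius comparable to $r_0 - r_1$, so that each derivative costs a factor $(r_0 - r_1)^{-1}$. The two extra powers should come from combining this with the $1/r_1$ and $P^{-1}$ weights, or from the vanishing of $\chi$ at $r_0$ when $r_0 = r_{ext}$; I expect this bookkeeping to be the delicate point. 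If that fails, I would instead work in the variable $\log(r_0 - r)$, mirroring the change of variables in Lemma \ref{greensingular}, so that Cauchy estimates become uniform.
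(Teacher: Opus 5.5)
Near $r=0$ your argument is essentially the paper's. You use Frobenius with exponents $\pm|m|$, and write $G$ as the product of the regular solution and the solution satisfying the condition at $r_0$, normalised by the constant $P\,W$; this is the paper's $a_1=O(r_1^{-|m|})$, $a_2=O(r_1^{|m|})$. The derivative bounds then come from the power series of $r^{|m|}\widetilde\chi_1$. There is one small omission: your claim $r^2D(0)\neq 0$ uses $m\neq 0$. For $m=0$, $D$ is regular at $0$, $N/(rD)\sim c/r$ and the exponents are $0$ and $2$, so this case has to be treated separately, as the paper does.

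The genuine gap is the regime $r_1>r_0/2$, which you leave open. The routes you sketch cannot close it, because they look for two powers of \emph{vanishing} of $G$ as $r_1\to r_0$, and these do not exist.
\begin{itemize}
\item For $r<r_1$, your own expansion gives $G=\kappa^{-1}\psi_-(r)\psi_+(r_1)\to -\kappa^{-1}C'\psi_-(r)$. In the vacuum case $C'\neq 0$, since $C'$ is proportional to $N/(rD)$ at $r_0$.
\item With Dirichlet data, $G$ vanishes only to first order in $r_0-r_1$.
\item The weights $1/r_1$ and $P^{-1}$ are bounded and nonvanishing near $r_0$, so they supply nothing either.
\end{itemize}
Hence for $n=0,1$ the exponent $2-n$ is not attainable in general. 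What the paper's argument actually yields is $(r_0-r_1)^{\min(2-n,0)}$, and this is the exponent that reappears in the proof of Lemma \ref{theoremsingular2}.

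The idea you are missing is that the ``$2$'' is a delay in the loss, not a gain. The coefficients of (\ref{Lust1}) are bounded near $r_0$ and $N/(rD)$ does not vanish there, so $\chi$, $\chi'$, $\chi''$ stay bounded up to $r_0$. The Cauchy estimate on disks of radius $\sim r_0-r$ must therefore be applied to $\chi''$, not to $\chi$. This gives $|\chi^{(n)}(r)|\le CK^n n!\,(r_0-r)^{2-n}$ for $n\ge 2$, and for $r<r_1$ one then uses $(r_0-r)^{2-n}\le (r_0-r_1)^{2-n}$. Applying Cauchy to $\psi_+$ itself, as you propose, loses exactly these two powers.

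Finally, in your own setting the coefficients are analytic in a complex neighbourhood of $r_0$; this does hold under Assumption \ref{assumptionAtheta}, since $p>0$ on $[0,r_0]$. Then $\psi_\pm$ continue analytically across $r_0$, and fixed-radius Cauchy estimates give $|\partial_r^n G|\le CK^n n!$ uniformly. After enlarging $C$ and $K$, this already implies the $n\ge 2$ bounds. Shrinking disks, or your $\log(r_0-r)$ variable, are needed only if, as in the paper, $\rho$ is allowed to be non-analytic at $r_0$.
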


\begin{proof}
In the $\theta-$pinch configuration, $B_\theta = 0$,
thus  the Hain-L\"ust equation simplifies into
\beq \label{Lust1}
\partial_r \Bigl[ {N \over r D} \partial_r \chi \Bigr] +{1 \over r} (\rho \omega^2 - F^2) 
\chi = 0.
\eeq
Let $G(r_1,r)$ be its Green function, defined by 
\beq \label{Grr}
\partial_r \Bigl[ {N \over r D} \partial_r G \Bigr] +{1 \over r} (\rho \omega^2 - F^2) 
G = \delta_{r_1}.
\eeq
First, away from $0$ and $r_{0}$ and away from the point source $r = r_1$, this equation is holomorphic,
thus the Green function is holomorphic and locally bounded. 
We just have to study the singularity at $r = 0$ and $r_{0}$
Let us first investigate the singularity at $r = 0$ and assume that $r < 2 r_{0} / 3$.

\begin{itemize}

\item If $m \ne 0$, when $r$ goes to $0$, provided $\gamma p F^2 - (\gamma p + |B|^2) \rho \omega^2 \ne 0$,
$$
D \sim {m^2 \over r^2} \Bigl[ \gamma p F^2 - (\gamma p + |B|^2) \rho \omega^2 \Bigr],
$$
thus
\beq \label{ND}
{N \over r D} \sim - r {\rho \omega^2 - F^2 \over m^2}.
\eeq
We now apply Fuchs' theory to this equation.
The indicial equation is $s^2 = m^2$, hence the exponents are $s = \pm |m|$. Note that the difference between these two exponents is an integer. Frobenius theory then provides the existence of two independent solutions of the Hain-L\"ust equation, of the form
$$
\chi_1(r) = r^{+|m|} \widetilde \chi_1(r) 
$$
and
$$
\chi_2(r) = r^{-|m|} \widetilde \chi_2(r)
+  \alpha r^{+|m|} \widetilde \chi_1(r) \log r,
$$
where $\widetilde \chi_1$ and $\widetilde \chi_2$ are holomorphic
with $\widetilde \chi_1(0) = 1$, $\widetilde \chi_2(0) = 1$
and $\alpha \in \mathbb{C}$.
We note that $\chi_1(0) = 0$ whereas $\chi_2(r)$ is not bounded near $0$.
We thus look for $G(r_1,r)$ in the form
\beq \label{G1}
G(r_1,r) = a_1(r_1) \chi_1(r), \quad \hbox{if} \quad r < r_1,
\eeq
\beq \label{G2}
G(r_1,r) = a_2(r_1) \chi_1(r) + a_3(r_1) \chi_2(r),
\quad \hbox{if} \quad r > r_1
\eeq
since $G$ must be bounded at $r = 0$.
Moreover, $G$ must be continuous and $N \partial_r G / r D$ must have 
a unit jump at $r = r_1$.
This gives the system
$$
a_1(r_1) \chi_1(r_1) - a_2(r_1) \chi_1(r_1) - a_3(r_1) \chi_2(r_1) = 0,
$$
$$
a_1(r_1) \chi_1'(r_1) - a_2(r_1) \chi_1'(r_1) - a_3(r_1) \chi_2'(r_1) = - {r_1 D \over N}.
$$
The boundary condition at $r_0$ gives a relation between $a_2$ and $a_3$, of the form
$a_2(r_1) = C_\star a_3(r_1)$, where the constant $C_\star$ is discussed in section \ref{boundaryconditions}.
This gives
$$
a_1 = C_0 {C_\star \chi_1(r_1) +  \chi_2(r_1) \over \Delta} {D \over N} r_1,
$$
$$
a_2 = C_\star^2 {\chi_1(r_1) \over \Delta} {D \over N} r_1,
$$
where
$$
\Delta = C_\star \Bigl( \chi_2'(r_1) \chi_1(r_1) - \chi_2(r_1) \chi_1'(r_1) \Bigr).
$$
We note that 
$$
\Delta  \sim - {2 |m| C_\star \over r_1} 
$$
as $r_1$ goes to $0$. Thus, using (\ref{ND}), we have
$$
a_1 = O(r_1^{-|m|}), \qquad a_2 = O(r_1^{|m|}).
$$
As a consequence, the Green function $G(r_1,r)$ is bounded even as $r_1$ goes to $0$.
%The study of $\partial_r G$ is similar. Note that $\partial_r^2 G$ is in general not bounded, 
%but is only bounded by $C r_1^{-1}$.

Let us turn to the proof of the bounds on the derivatives. First for $r < r_1$,
since $\widetilde \chi_1$ is analytic, we get
$$
|\widetilde \chi_1^{(k)}(r)| \le k!\, M C^k
$$
for some constants $M$ and $C$. Using Leibniz’s rule, we can show that
$$
\chi_1^{(n)}(r) = \sum_{j=0}^{n} \binom{n}{j} (r^{|m|})^{(j)} \widetilde \chi_1^{(n-j)}(r).
$$
Because \(r^{|m|}\) is a monomial, its derivatives are zero for \(j > |m|\). For \(j \le |m|\),
$$
\bigl| (r^{|m|})^{(j)} \bigr| \le \frac{|m|!}{(|m|-j)!}\, r^{|m|-j}.
$$
Hence, if $n \le |m|$,
\begin{align*}
|\chi_1^{(n)}(r)| &\le \sum_{j=0}^{\min(|m|,n)} \binom{n}{j} \frac{|m|!}{ (|m|-j)!}\, r^{|m|-j}\; (n-j)!\, M C^{n-j}.
\\
&\le n! M r^{|m| - n} \, \sum_{j=0}^{\min(|m|,n)} \frac{|m|!}{j ! (|m|-j)!} C^{n-j} r^{n-j}
\\
&\le n! M r^{|m|-n} C^n
\end{align*}
up to a change of $C$.
If $n > |m|$, the same computations lead to $|\chi_1^{(n)}(r) | \lesssim n! M C^n$.

Using the bound for \(a_1\) and the bound for \(\chi_1^{(n)}\), we get for any \(r \in (0, r_1)\) that
\beq \label{BB1}
|\partial_r^n G(r_1,r)|
\le |a_1(r_1)|\; |\chi_1^{(n)}(r)|
\le C_1 K^n n! \, r^{\max(|m|-n,0)} r_1^{-|m|}
\eeq
for some constant $C_1$.

The case $r > r_1$ can be treated using similar arguments, and we obatin
\beq \label{BB2}
|\partial_r^n G(r_1,r) | \le C_1 K^n n! \, r_1^{|m|} r^{-|m| - n} .
\eeq

\item If $m = 0$, then $D$ remains smooth at $r = 0$.
In this case, the Frobenius equation on the exponents is $s (s - 2)= 0$.
Thus, there exist two independent solutions of the Hain-L\"ust equation of the form
$$
\chi_1(r) = r^2 \widetilde \chi_1(r) 
$$
and
$$
\chi_2(r) = \widetilde \chi_2(r) + \alpha r^2 \widetilde \chi_1(r) \log r,
$$
where $\widetilde \chi_1$ and $\widetilde \chi_2$ are holomorphic
with $\widetilde \chi_1(0) = 1$, $\widetilde \chi_2(0) = 1$
and $\alpha \in \mathbb{C}$. The computation of the Green function can be done as in the previous paragraph.

\end{itemize}

We now turn to the study of the singularity at \(r_0\), where the density \(\rho\) is not smooth, and focus on the region \(r>r_0/2\). More precisely, near \(r_0\), the density \(\rho\) is analytic on a domain of the form
$$
\Delta=\Bigl\{\frac{r_0}{2}<r<r_0,\quad |\Im r|<\delta\Bigr\}
$$
for some \(\delta>0\). The solutions \(\chi_1,\chi_2\) are defined on \(\Delta\). Let us study their behaviour as \(r\to r_0\).
As the coefficients of the Hain-L\"ust equation are bounded, $\chi$, $\chi'$ and $\chi''$ are bounded when $r \to r_0$.
Then, using the holomorphy of $\chi''$, each derivative leads to a loss of a factor $r_0 - r$.
This proves the required estimate near \(r_0\) and completes the proof of the lemma.
\end{proof}

\begin{remark}
The relation $\gamma p F^2 = (\gamma p + B^2) \rho \omega^2$ is the dispersion relation of the Alfvén waves.
In this case, the Hain-Lüst degenerates into a zeroth order operator.
\end{remark}

We now consider the regime $|\omega|\to+\infty$. We observe that
$$
\frac{N}{rD} = \frac{r(\rho\omega^2-F^2)} {r^2\rho\omega^2
[(\gamma p+|B|^2)-\gamma pF^2\rho^{-1}\omega^{-2}]^{-1} -m^2-k^2r^2 },
$$
changes behavior at the radius
$$
r_c = \frac{1}{|\omega|} \sqrt{ \frac{m^2}{\rho}(\gamma p+|B|^2) }.
$$
For $r\ll r_c$,
$$
\frac{N}{rD} \sim -\frac{r}{m^2}(\rho\omega^2-F^2),
$$
while for $r\gg r_c$,
$$
\frac{N}{rD} \sim \frac{\gamma p+|B|^2}{r}.
$$
At $r=0$, the Frobenius exponents are $s=\pm |m|$.  Consequently there exist two independent solutions
$$
\chi_1(r,\omega)=r^{|m|}\widetilde\chi_1(r,\omega),
$$
$$
\chi_2(r,\omega)=r^{-|m|}\widetilde\chi_2(r,\omega)
+\alpha r^{|m|}\widetilde\chi_1(r,\omega)\log r.
$$
Introducing $y=r/r_c$, the scaled equation shows that $\widetilde\chi_1$ and $\widetilde\chi_2$ are uniformly bounded in $\omega$ for $0\le r / r_c \lesssim 1$.

For $r\ge r_c$, we use the WKBJ method. Writing
$$
\chi(r,\omega)=e^{\omega\phi(r,\omega)},
$$
and substituting into the leading-order equation
$$
\partial_r\Bigl(
\frac{\gamma p+|B|^2}{r}\partial_r\chi
\Bigr)
+
\frac{\rho\omega^2-F^2}{r}\chi
=
0,
$$
we obtain the Riccati equation
$$
A(\phi')^2+\rho + \frac1\omega \Bigl[ A\phi'' + \Bigl(A'-\frac{A}{r}\Bigr)\phi' \Bigr] - \frac{F^2}{\omega^2}
= 0,
$$
where
$$
A=\gamma p+|B|^2.
$$
At leading order,
$$
A(\phi')^2+\rho=0,
$$
hence
$$
\phi_\pm' \sim \pm i\sqrt{\frac{\rho}{A}}.
$$
The two WKB solutions are therefore
\beq \label{lala}
\chi_\pm(r,\omega) = e^{\omega\phi_\pm(r,\omega)}
= \exp\Bigl( \pm i\omega\int^r\sqrt{\frac{\rho(t)}{A(t)}}\,dt \Bigr)
\Bigl(1+O(|\omega|^{-1})\Bigr).
\eeq
We choose the branch of the square root $\sqrt{\rho/A}$
so that the solution $\chi_+$ decays exponentially as $r$ increases, while $\chi_-$ decays exponentially as $r$ decreases.

We now construct global solutions $\tilde \chi_\pm(r,\omega)$ on $0 \le r \le r_0$.
For $r \ll r_c$, we define $\tilde \chi_+$ by $\tilde \chi_+(r,\omega) = \chi_1(r,\omega)$.
We then extend it for $r \gg r_c$ by a linear combination of $\chi_\pm$
$$
\tilde \chi_+(r,\omega) = \alpha_{++} \chi_+(r,\omega) + \alpha_{+-} \chi_-(r,\omega).
$$
The study of $\partial_r \tilde \chi_+(r,\omega) / \tilde \chi_+(r,\omega)$ near $r_c$ shows that $\alpha_{++} \ne 0$.

To construct $\tilde \chi_-$, we start with the linear combination of $\chi_\pm$ which satisfies the boundary condition
at $r_0$ and extended it to the left of $r_c$. As $\omega$ is not an eigenvalue, 
$\tilde \chi_-$ is not a multiple of $\chi_1$ for $r < r_c$ and thus $\tilde \chi_-$ and $\tilde \chi_+$ are two independent solutions.

We then explicitly construct the Green function by
$$
G(r_1,r) = {r_1 D \over N} {\tilde \chi_-(r_1) \tilde \chi_+(r) \over W}
$$
if $r < r_1$ and by
$$
G(r_1,r) = {r_1 D \over N} {\tilde \chi_+(r_1) \tilde \chi_-(r) \over W} 
$$
if $r > r_1$, where $W$ is the Wronskian of $\tilde \chi_\pm$.
These formulas are the generalizations of (\ref{G1}) and (\ref{G2}) for large $\lambda$.
In view of (\ref{lala}), $W$ is of order $\omega$.
In particular,
\beq \label{bounddd}
| G | \lesssim {C \over |\lambda|} .
\eeq
The estimates on the derivatives are similar.

%%%%%%%%%%%%%%%%

\subsubsection{Study of the Green function }

%%%%%%%%%%%%%%%%

Let us now study bounds on the resolvent (\ref{LMHDsimple}) or equivalently (\ref{mat1}).
We introduce the singular generator
$$
\mathcal G_{\mathrm{sing}}[\xi](\rho) =
\sum_{k\ge 0} \Bigl\| \bigl(r(r_0-r)\partial_r\bigr)^k \xi \Bigr\|_{L^\infty} \frac{\rho^k}{k!}.
$$

\begin{lemma} \label{theoremsingular2}
Under the assumptions of Lemma~\ref{theta}, there exists $\rho_1>0$, independent of $f$, such that
$$
\|r^{-1} \xi \|_{L^\infty} + \mathcal G_{\mathrm{sing}}[\partial_r \xi](\rho_1)
\lesssim {1 \over |\lambda|} \mathcal G_{\mathrm{sing}}[F_0](\rho_1).
$$
\end{lemma}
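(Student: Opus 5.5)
The plan is to write the solution of the resolvent problem as $\xi = \int_0^{r_0} G(r_1,r) f(r_1)\,dr_1$. Here $G$ is the Green function of the Hain-L\"ust operator built in Lemma~\ref{theta}, and $f$ is the scalar source obtained from $\widetilde F_0$ by eliminating $\eta_0,\zeta_0$ through the lower $2\times 2$ block $M_1$. Since $M_1^{-1}$ has analytic coefficients under the non-degeneracy hypotheses of Lemma~\ref{theta}, we get $\mathcal G_{\mathrm{sing}}[f](\rho)\lesssim \mathcal G_{\mathrm{sing}}[F_0](\rho)$. This elimination produces at most one derivative of $F_0$, and that derivative is absorbed into the divergence form $\partial_r[\cdots]$ of the equation, which can then be integrated by parts against $G$.

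The bound on $\|r^{-1}\xi\|_{L^\infty}$ follows directly from (\ref{BB})--(\ref{BB0}) with $n=0$. For $r<r_1<r_0/2$ we have $|G|\lesssim r^{|m|}r_1^{-|m|}\le r\,r_1^{-1}$ when $|m|\ge1$. For $r>r_1$ we have $|G|\lesssim (r_1/r)^{|m|}$. Integrating in $r_1$ therefore gives $\int |G|\,dr_1\lesssim r$. The case $m=0$ is handled the same way using the exponent $2$.

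For the derivatives, I would follow Lemma~\ref{greensingular}, now with the vector field $r(r_0-r)\partial_r$ in place of $x\partial_x$. Set $s=\int^r dt/(t(r_0-t))$, which maps $(0,r_0)$ onto $\rit$, so that $(r(r_0-r)\partial_r)^n=\partial_s^n$. The estimates (\ref{BB})--(\ref{BB0}), with their $K^n n!$ growth and powers $r^{-n}$ or $(r_0-r_1)^{-n}$, are exactly what is needed: every $\partial_r$ costs a factor $r^{-1}$ near $0$ or $(r_0-r)^{-1}$ near $r_0$, and this factor is compensated by the weight $r(r_0-r)$. Hence $|\partial_s^n \widetilde G|\lesssim K'^n n!$, and the Jacobian $dr_1=r_1(r_0-r_1)\,ds_1$ plays the role of the integrable weight $e^t$ in Lemma~\ref{greensingular}. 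This verifies (\ref{singular1}). For (\ref{AAA2}), the jumps come only from the unit jump of $(N/rD)\partial_r G$ at $r=r_1$. Their $\partial_s$-derivatives are analytic functions of $r_1$ of the same type, because the jump equals $r_1D/N$, which is analytic and bounded by (\ref{ND}). Lemma~\ref{greensingular} then yields $\mathcal G_{\mathrm{sing}}[\partial_r\xi](\rho)\lesssim\mathcal G_{\mathrm{sing}}[f](\rho)$ for $\rho\le\rho_G/2$. We take $\rho_1$ to be the smaller of $\rho_G/2$ and the analyticity radii of $p$ and $B_z$.

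The factor $1/|\lambda|$ is obtained, for $|\lambda|$ large, from the WKB construction and (\ref{bounddd}), since the Wronskian $W$ is of order $\omega$. For bounded $\lambda$ with $\Re\lambda\ge 2\Re\lambda_0$, the factor $1/|\lambda|$ can be absorbed into the constant. I expect the main obstacle to be showing that the derivative bounds hold uniformly in $\omega$ in the large-$\lambda$ regime. The difficulty is that the WKB phase $e^{\pm i\omega\int\sqrt{\rho/A}}$ produces factors $\omega^n$ under $\partial_r^n$, which would destroy analyticity with a fixed radius. To handle this, I would mimic the treatment of (\ref{example2}): write $G=e^{-\sqrt{\lambda}|\Phi(r)-\Phi(r_1)|}W^{-1}\theta_+\theta_-$ with $\Phi=\int\sqrt{\rho/A}$, change variables $z=\Phi(r)-\Phi(r_1)$, and let the derivatives fall on the smooth factors $\theta_\pm$ and on $f$ instead of on the exponential. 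Near $r_c\sim|\omega|^{-1}$ I would use the rescaled variable $y=r/r_c$, in which $r\partial_r=y\partial_y$ is scale invariant and the rescaled solutions are uniformly bounded.
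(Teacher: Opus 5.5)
Your skeleton is the paper's: kernel bounds from Lemma~\ref{theta}, then Lemma~\ref{greensingular} after straightening the vector field $r(r_0-r)\partial_r$, then the WKB bound (\ref{bounddd}) for large $\lambda$. However, the step from the scalar Hain--L\"ust Green function to a statement about $\xi$ and $F_0$ breaks down.

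The first problem is the weighted $L^\infty$ bound. The kernel $G$ of Lemma~\ref{theta} produces $\chi=r\xi_0$, not $\xi$. So even granting $\int_0^{r_0}|G(r_1,r)|\,dr_1\lesssim r$, you only get $|\chi(r)|\lesssim r\|f\|_{L^\infty}$. That is a bound on $\xi_0=r^{-1}\chi$, one power of $r$ short of $r^{-1}\xi_0=r^{-2}\chi$. The integral bound itself also does not follow from the $n=0$ case of Lemma~\ref{theta}. For $|m|=1$, $\int_r^{r_0/2} r\,r_1^{-1}\,dr_1=r\log\bigl(r_0/(2r)\bigr)$, and for $r_1>r_0/2$ the stated bound is $(r_0-r_1)^2$, with no factor $r$ at all. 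The paper does this bookkeeping separately. It passes to the Green function $G_2$ of $(\xi_0,\eta_0,\zeta_0)$, which carries an extra factor $r^{-1}$. It then passes to the Green function $G_3$ of the full system (\ref{mat1}), using the $r^{-1}(\gamma p+|B|^2)\partial_r^2$ structure of $M_{11}$ to gain a factor $r_1$. Only then does it integrate to get $|\xi(r)|\le Cr\|F_0\|_{L^\infty}$. Note also that for $|m|=1$ the regular solution $\chi_1=r\widetilde\chi_1$ makes $\xi_0$ generically nonzero at $r=0$, so in that case no such weight is available from a kernel bound of this type.

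The second problem is that eliminating $\eta_0,\zeta_0$ is not harmless. Because $M_{12}$ and $M_{13}$ act as $\partial_r(\cdots)$, solving the lower block and substituting puts a $\partial_r$ on multiples of the tangential components of $\widetilde F_0$. The coefficients include $g(\gamma p+|B|^2)$ with $|g|=|m|/r$. So the inequality $\mathcal G_{\mathrm{sing}}[f]\lesssim\mathcal G_{\mathrm{sing}}[F_0]$ is false. Your integration by parts moves the derivative onto $\partial_{r_1}G$, a derivative in the source variable. Lemma~\ref{theta} says nothing about it, and (\ref{singular1})--(\ref{AAA2}) would have to be re-proved for that kernel.

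You also never return from $\chi$ to the full displacement. The components $\eta_0,\zeta_0$ are terms in $\chi'$ plus $(M_1+\rho\omega^2)^{-1}$ applied to the tangential components of $\widetilde F_0$. That is an algebraic operation gaining no derivative, since the lower block contains no $\partial_r$. Hence $\partial_r\eta_0$ and $\partial_r\zeta_0$ contain $\chi''$ and $\partial_r\widetilde F_0$, and nothing in your argument controls these by $\mathcal G_{\mathrm{sing}}[F_0](\rho_1)$. This is the part the paper routes, only sketchily, through $G_3$.

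There are two smaller slips.
\begin{itemize}
\item By (\ref{ND}), $r_1D/N\sim-m^2/\bigl(r_1(\rho\omega^2-F^2)\bigr)$ is not bounded. The jump of $\partial_s\widetilde G$ is bounded only after multiplying by the weight $r_1(r_0-r_1)$.
\item For $r<r_1$ near $r_0$, the factor $(r_0-r_1)^{-n}$ in Lemma~\ref{theta} is not compensated by $(r_0-r)^n$.
\end{itemize}
On the other hand, your concern about the factors $\omega^n$ generated by the WKB phase is legitimate. The paper passes over it by simply citing (\ref{bounddd}).
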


\begin{proof} 
We first study the Green function of $\xi_0$, $\eta_0$ and $\zeta_0$.
We recall that $\chi = r \xi_0$ and that $\eta_0$ and $\zeta_0$ are obtained from $\xi_0$ 
through multiplication by smooth function or application of the operator $r \partial_r$.
As a consequence, the Green function $G_2$ of $\xi_0$, $\eta_0$ and $\zeta_0$ satisfies
\beq \label{BB}
| \partial_r^n G_2(r_1,r) | \le C K^n n! \,  \Bigl[ r^{\max(|m|-n-1,-1)} r_1^{-|m|} + (r_0 - r_1)^{2 - n} \Bigr]
\eeq
if $r < r_1$ and
\beq \label{BB0}
| \partial_r^n G_2(r_1,r) | \le C K^n n! \,  \Bigl[ r^{-|m|-n-1} r_1^{|m|} + (r_0 - r_1)^{\min(2 - n,0)} \Bigr].
\eeq
The most singular term of the first coefficient $M_{11}$ of $M$ is $r^{-1} (\gamma p + |B|^2) \partial_r^2$,
thus when we construct the Green function $G_3$ of (\ref{mat1}), or equivalently of (\ref{LMHDsimple}),
we gain a factor $r_1$, leading to
\beq \label{BBG3}
| \partial_r^n G_3(r_1,r) | \le C K^n n! \,  \Bigl[ r^{\max(|m|-n-1,-1)} r_1^{1-|m|} + (r_0 - r_1)^{2 - n} \Bigr]
\eeq
if $r < r_1$ and
\beq \label{BB0G3}
| \partial_r^n G_3(r_1,r) | \le C K^n n! \,  \Bigl[ r^{-|m|-n-1} r_1^{|m|+1} + (r_0 - r_1)^{\min(2 - n,0)} \Bigr].
\eeq
Now
$$
\xi(r) = \int_0^{r_0} G_3(r_1,r) F_0(r_1)\,dr_1,
$$
which leads to for bounded $\lambda$ to
$$
|\xi(r)| \le C r\| F_0 \|_{L^\infty}, 
\qquad
|\partial_r \xi(r)| \le C \| F_0 \|_{L^\infty}.
$$
For large $\lambda$, we use (\ref{bounddd}).
The Lemma is then a consequence of Lemma \ref{greensingular}.
\end{proof}

%%%%%%%%%%%%%%%%%%%%%%%%%%%%%%%%%%%%%%%%%%%%%%%%%%%%%%%%%%%%%%%%%

\subsubsection{Nonlinear instability for the $\theta$-pinch}
\label{sec:theta-nonlinear}

%%%%%%%%%%%%%%%%%%%%%%%%%%%%%%%%%%%%%%%%%%%%%%%%%%%%%%%%%%%%%%%%%

We now apply the general instability theorem to the $\theta$-pinch configuration.

\begin{theorem}[Nonlinear instability of the $\theta$-pinch]
\label{nonl}
Let $(B_0,u_0=0,p_0)$ be a stationary $\theta$-pinch solution satisfying
Assumption~\ref{assumptionAtheta}. Assume that the linearized Hain--L\"ust
operator has finite spectral radius $\sigma>0$. Then the plasma column is
nonlinearly unstable in the sense of Theorem~\ref{linearnonlinear}.
\end{theorem}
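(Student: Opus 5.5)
The plan is to verify Assumptions 1, 2 and 3 of Theorem~\ref{linearnonlinear} for the MHD system written in the Lagrangian coordinates (\ref{L1})--(\ref{L5}) on the fixed domain $\Omega_0=\{r\le r_0\}$. Every occurrence of $\mathcal G$ will be replaced by the singular generator $\mathcal G_{sing}$ built on $r(r_0-r)\partial_r$, as allowed by the remark following Lemma~\ref{greensingular}. We then conclude exactly as in the proof of that theorem. The unknown is $u=(\xi,v,\rho,B)$, perturbed around $(0,0,\rho_0,B_0)$, with the $\theta$ and $z$ dependence treated by Fourier modes $e^{i(m\theta+kz)}$. Since products of modes generate all modes, we will restrict to a single $k$-lattice and note that the generator bounds (\ref{prop1}) are stable under summation over Fourier modes when one adds a weight $e^{\rho(|m|+|k|)}$.

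\textbf{Assumption 1.} We need an eigenvalue $\lambda_0$ with $\Re\lambda_0>\sigma/2$ and an analytic eigenfunction. Because $\rho_0^{-1}F$ is self-adjoint, the spectrum in $\lambda$ is symmetric, and the hypothesis $\sigma>0$ finite gives unstable spectrum. We will pick $\lambda_0$ real, close to $\sigma$. If $\sigma$ is not attained by an eigenvalue, we take $\lambda_0$ with $\Re\lambda_0>\sigma/2$, which is possible because Hain--Lüst unstable point spectrum accumulates only at the continua. The analyticity of $U_{\lambda_0}$ in the sense $\mathcal G_{sing}[U_{\lambda_0}](\rho_0)<\infty$ will follow from the Frobenius description of $\chi_1=r^{|m|}\widetilde\chi_1$ near $r=0$ in Lemma~\ref{theta}, from holomorphy of the coefficients in the interior (Assumption~\ref{assumptionAtheta} on $B_z,p$), and from the bounds near $r_0$, where each application of $r(r_0-r)\partial_r$ costs at most $C K^n n!$. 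The components $\eta_0,\zeta_0$ are recovered from $\chi$ by the explicit formulas of the Hain--Lüst reduction.

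\textbf{Assumption 2.} This is Lemma~\ref{theoremsingular2}, with one adjustment: the resolvent of the first-order system in $(\xi,v,p,B)$ must be reduced to (\ref{LMHDsimple}). Writing $(\lambda-L)(\xi,v,\dots)=f$, we eliminate $v=\lambda\xi-f_\xi$, and $p_1,B_1$ through (\ref{LMHD2c})--(\ref{LMHD3c}). This yields $(\rho_0\lambda^2-F)\xi=\widetilde F_0$, where $\widetilde F_0$ involves $\lambda f_\xi$ and first derivatives of $f_p,f_B$. Lemma~\ref{theoremsingular2} gives $\mathcal G_{sing}[\partial_r\xi]\lesssim|\lambda|^{-1}\mathcal G_{sing}[\widetilde F_0]$. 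We need to check that one derivative of $f$ and the factor $\lambda$ are absorbed, so that the final bound is $C_0|\lambda|^{-1}\mathcal G_{sing}[f]$. This requires using the large-$\lambda$ Green function bound (\ref{bounddd}) together with the gain of one derivative in Lemma~\ref{greensingular}. I expect this bookkeeping, in particular making the derivative count of $\widetilde F_0$ match the gain, to be the main technical obstacle. If needed, I would first try measuring $\xi$ one derivative higher than $(v,p,B)$ in the generator norm.

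\textbf{Assumption 3.} The nonlinearity in (\ref{L2})--(\ref{L4}) consists of products of $\A-I$, $J-1$, $\rho-\rho_0$, $B-B_0$ with first derivatives of $v$, $B$, $p(\rho)$. Using $\A=(I+D\xi)^{-1}$, $J=\det(I+D\xi)$ and (\ref{prop3}) for analytic $F$, it is a sum of holomorphic functions of $(D\xi,\rho,B)$ times first derivatives, which is the extension allowed after Assumption~\ref{assumptionQ} and in the remark following the proof of Theorem~\ref{linearnonlinear}. The boundary conditions (\ref{L6})--(\ref{L7}) are carried by the fixed interface and incorporated into the Green function through $C_\star$. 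With all three assumptions verified, Theorem~\ref{linearnonlinear} yields solutions with $\mathcal G_{sing}[u(0)-u_0](\rho_1)\le\delta_1$ and $\|u(T)-u_0\|_{L^\infty}\ge\eps_1$, which is the claimed nonlinear instability.
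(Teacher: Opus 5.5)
Your plan matches the paper's proof. The paper also only checks the three assumptions of Theorem~\ref{linearnonlinear}, with $\mathcal G_{sing}$ in place of $\mathcal G$. It gets Assumption~1 from the regular Frobenius solution $r^{|m|}\widetilde\chi_1$, with the other components obtained algebraically or through $r^{-1}\partial_r(r^2\,\cdot)$. It gets Assumption~2 from Lemma~\ref{theoremsingular2}, and Assumption~3 from the one-derivative structure of the Lagrangian equations.

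The main variation is how you obtain an eigenvalue with $\Re\lambda_0>\sigma/2$. The paper asserts that the operator with boundary conditions (\ref{condb1})--(\ref{condb3}) has compact resolvent, hence discrete spectrum. You instead use self-adjointness of $\rho_0^{-1}F$ and the fact that unstable eigenvalues can only accumulate at the continua, which lie on the imaginary $\lambda$-axis. Your justification is the more accurate one: ideal MHD generically has Alfv\'en and slow continua, so the resolvent is not compact.

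The derivative bookkeeping you flag in Assumption~2 is not carried out in the paper. It calls Assumption~2 a direct consequence of Lemma~\ref{theoremsingular2}, although it notes itself that $F_0$ is of first order in $p_1,B_1$ and of second order in $\xi_1$. The reduced source also contains $\lambda f_\xi$. So on this step your proposal is as complete as the paper's argument, but neither closes it.

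One point you should settle at the outset rather than keep as a fallback. With the unknown $u=(\xi,v,\rho,B)$, the quadratic terms coming from $\nabla_\A-\nabla$, such as $D\xi\cdot\nabla p_1$, carry a derivative on both factors. They therefore fit neither Assumption~\ref{assumptionQ} nor the ``holomorphic function of $u$ times $\nabla v$'' extension. The recursion (\ref{ineq1}) then produces $\partial_\rho G_{n'}\,\partial_\rho G_{n-n'}$, and the Hopf inequality (\ref{inequality}) is lost. You must put $D\xi$ into the unknown, which is your idea of measuring $\xi$ one derivative higher, and then prove Assumption~2 for that enlarged first-order system.

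Two further points are left open by both arguments:
\begin{itemize}
\item $C_\star$ encodes only the linearized interface condition. With a vacuum region, (\ref{L7}) is nonlinear in $\xi|_{\Sigma_0}$ and creates boundary forcing at every order of the series, which Theorem~\ref{linearnonlinear} does not accommodate.
\item Your Fourier-weighted norm needs the constants of Lemma~\ref{theoremsingular2} to be uniform in $(m,k)$. This is because the $n$-th term of the series contains the harmonics $(jm,jk)$, $|j|\le n$.
\end{itemize}
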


\begin{proof}
We verify the three assumptions of Theorem~\ref{linearnonlinear}.

Let us check the first Assumption.
The linearized operator on $(0,r_0)$ with the boundary conditions
\eqref{condb1}--\eqref{condb3} has a compact resolvent. 
%This follows from the explicit Green function estimates proved in Lemma~\ref{theta} and from the
%standard one-dimensional Sturm--Liouville compactness argument. 
Hence its spectrum is discrete and consists of eigenvalues of finite multiplicity.
Thus there exists an eigenvalue $\lambda_0$ such that 
$\Re\lambda_0 >\sigma / 2$.
The corresponding eigenfunction
$\chi_{\lambda_0}$ is bounded at $r=0$, so it is proportional
to the regular Frobenius solution
$r^{|m|}\widetilde\chi_1(r)$
where $\widetilde\chi_1$ is holomorphic.
The other components of the full eigenvector are
obtained from $\chi_{\lambda_0}$ by algebraic operations and by
$r^{-1}\partial_r(r^2\,\cdot)$ and are thus also holomorphic.

Assumption $2$ is a direct consequence of the Lemma \ref{theoremsingular2}.
It remains to check the Assumption $3$.
In Lagrangian coordinates, the nonlinear terms of the MHD involve only one derivative, thus
Assumption~3 is satisfied.
All the assumptions of Theorem~\ref{linearnonlinear} are therefore verified. 
Applying that theorem, the stationary $\theta$-pinch is nonlinearly unstable.
\end{proof}

%%%%%%%%%%%%%%%%%%%%%%%%%%%%%%%%%%%%%%%

\subsection{Study of the $z$-pinch} \label{sec:zpinch}

%%%%%%%%%%%%%%%%%%%%%%%%%%%%%%%%%%%%%%%

We now turn to the case of the $z$-pinch. Throughout this section,
we consider a $z$-pinch configuration satisfying Assumption~\ref{assumptionAZ}.

For $B_z=0$, the Hain--L\"ust equation becomes
\begin{equation}
\label{Hain2}
\partial_r \Bigl[ \frac{N}{rD}\partial_r\chi\Bigr] + H \chi = 0,
\end{equation}
where
\begin{align*}
H &=
\frac{\rho\omega^2-F^2}{r}
-\partial_r\Bigl(\frac{B_\theta^2}{r^2}\Bigr)
-\frac{4k^2}{r^3}\frac{B_\theta^2}{D}
\bigl(|B|^2\rho\omega^2-\gamma pF^2\bigr)
\\
&\quad + \partial_r\Bigl[
\frac{2k}{r^2}\frac{B_\theta G}{D}
\bigl((\gamma p+|B|^2)\rho\omega^2-\gamma pF^2\bigr)
\Bigr].
\end{align*}
We now analyze the singularity of this equation at $r=0$.
As $B_\theta(0)=0$ and $B_\theta$ is analytic, we may write
$B_\theta(r)=r \widetilde B_\theta(r)$,
with $\widetilde B_\theta$ analytic and $\widetilde B_\theta(0)=B_\theta'(0)$. Since
$G=-kB_\theta$ and $F = r^{-1} mB_\theta$,
the functions
$r^{-2} B_\theta^2$, $r^{-2} B_\theta G$, and $F$
are all analytic near $r=0$. Moreover, for $m\neq0$,
$F(0)=mB_\theta'(0)$.
Finally, the quantities
$|B|^2=B_\theta^2$ and $|B|^2\rho\omega^2-\gamma pF^2$
are at least $O(r^2)$ near $r=0$, so after division by $D$, the corresponding terms in $H$ are bounded. Thus there exists a smooth function $\widetilde H$ such that
$$
H = \frac{\rho\omega^2-F^2}{r} + \widetilde H.
$$
This is the only singular term in the zeroth-order part of the Hain--L\"ust equation.

Let us first consider the case $m\neq0$. Near $r=0$,
$$
D \sim - \frac{m^2}{r^2} \gamma p\bigl(\rho\omega^2-F^2\bigr).
$$
Indeed, $B_\theta=O(r)$, so the terms involving $|B|^2$ are of lower order compared with $\gamma p$. 
Under the nondegeneracy condition
$$
\rho(0)\omega^2-F(0)^2\neq0,
$$
we also have
$$
N
=
(\rho\omega^2-F^2) \bigl((\gamma p+|B|^2)\rho\omega^2-\gamma pF^2\bigr) \sim \gamma p(\rho\omega^2-F^2)^2.
$$
Therefore
$$
\frac{N}{rD} \sim - \frac{r}{m^2}\bigl(\rho\omega^2-F^2\bigr),
$$
and
$$
H \sim \frac{\rho\omega^2-F^2}{r}.
$$
Thus the Hain--L\"ust equation has exactly the same Frobenius structure near $r=0$ as in the $\theta$-pinch case. The indicial exponents are again $s=\pm |m|$,
and there are two independent solutions
$$
\chi_1(r)=r^{|m|}\widetilde\chi_1(r),
$$
$$
\chi_2(r)=r^{-|m|}\widetilde\chi_2(r) +\alpha r^{|m|}\widetilde\chi_1(r)\log r.
$$
In particular, the Green function satisfies the same bounds as in Lemma~\ref{theta}.

The case $m=0$ is analogous. Then $F=0$, and
$$
\frac{N}{rD} \sim \frac{C}{r},
$$
so the Frobenius exponents are $0$ and $2$. The Green function estimates are obtained in the same way.

We now state the following instability result for the $z$-pinch.

\begin{theorem}[Nonlinear instability for the $z$-pinch]
\label{non2}
Let $(B_0,u_0=0,p_0)$ be a stationary $z$-pinch solution satisfying Assumption~\ref{assumptionAZ}. Assume that the plasma column is linearly unstable, in the sense that the spectral radius $\sigma$ defined in \eqref{definitionsigma} satisfies 
$0 < \sigma < + \infty$.
Then the plasma column is nonlinearly unstable in the sense of Theorem~\ref{linearnonlinear}.
\end{theorem}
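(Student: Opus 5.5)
The plan is to follow exactly the scheme of the proof of Theorem~\ref{nonl}. We verify Assumptions $1$, $2$ and $3$ of Theorem~\ref{linearnonlinear} for the linearized MHD operator around the $z$-pinch. Throughout, the generator function $\mathcal{G}$ is replaced by the singular generator $\mathcal{G}_{sing}$ built on $r(r_0-r)\partial_r$, as allowed by the remark following Lemma~\ref{greensingular}, and we then invoke Theorem~\ref{linearnonlinear}.

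\emph{Assumption $1$.} Since $0<\sigma<+\infty$, we will show that the resolvent of the linearized operator on $(0,r_0)$, with the boundary conditions (\ref{condb1})--(\ref{condb3}), is compact. This follows from the Green function bounds obtained above for the $z$-pinch Hain--L\"ust equation (\ref{Hain2}), which coincide with those of Lemma~\ref{theta}: the kernel is bounded and its singularities at $0$ and $r_0$ are integrable. Compactness implies that the unstable spectrum consists of isolated eigenvalues of finite multiplicity. Since the spectral abscissa $\sigma$ is a supremum over this set, there is an eigenvalue $\lambda_0$ with $\Re\lambda_0>\sigma/2$. The eigenfunction $\chi_{\lambda_0}$ is bounded at $0$, so it is a multiple of the regular Frobenius solution $r^{|m|}\widetilde\chi_1$, or $r^2\widetilde\chi_1$ if $m=0$, with $\widetilde\chi_1$ holomorphic. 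Frobenius theory applies because, under Assumption~\ref{assumptionAZ}, the coefficients $r^{-2}B_\theta^2$, $r^{-2}B_\theta G$, $F$ and $\widetilde H$ are analytic, so the equation has a regular singular point at $0$. Near $r_0$, the argument at the end of the proof of Lemma~\ref{theta} gives analyticity in the weighted sense. The components $\eta_0,\zeta_0$ and then $p_1,B_1$ are obtained from $\chi$ by multiplication by analytic functions and by $r^{-1}\partial_r(r\,\cdot)$. This yields $\mathcal{G}_{sing}[U_{\lambda_0}](\rho_0)<\infty$.

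\emph{Assumption $2$.} This is the main step. We will reproduce Lemma~\ref{theoremsingular2} in the $z$-pinch setting, in three stages.
\begin{itemize}
\item[(i)] For bounded $\lambda$, use the Frobenius solutions $\chi_1,\chi_2$ with exponents $\pm|m|$ (or $0,2$) and the nondegeneracy $\rho(0)\omega^2\neq F(0)^2$ to obtain the bounds (\ref{BB}), (\ref{BB0}) on $\partial_r^nG$. Then gain the factor $r_1$ for $G_3$ exactly as in (\ref{BBG3}), (\ref{BB0G3}).
\item[(ii)] For $|\lambda|$ large, use the WKB construction. Away from $r_c$, the leading operator is again $\partial_r(A r^{-1}\partial_r)+\rho\omega^2 r^{-1}$ with $A=\gamma p+B_\theta^2$, which is nonvanishing because $p>0$. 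The Wronskian is of order $\omega$, which gives (\ref{bounddd}).
\item[(iii)] Convert these kernel bounds into the hypotheses (\ref{singular1}) and (\ref{AAA2}) of Lemma~\ref{greensingular} and conclude.
\end{itemize}

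\textbf{Expected obstacle.} The delicate point is the uniformity of the bounds in $\lambda$ on the half-plane $\Re\lambda\ge2\Re\lambda_0$, for two reasons. First, the extra $k$-dependent terms in $H$ must stay uniformly bounded; they involve $D^{-1}$, and $D$ could vanish inside $(0,r_0)$. Second, the matching at $r_c$ must be controlled. For the first issue I would argue that the zeros of $D$ and $N$ correspond to slow and Alfv\'en continua lying on the imaginary axis, hence are excluded when $\Re\lambda>0$. The remaining terms are bounded analytic perturbations of the $\theta$-pinch structure, which can be absorbed by a Volterra or perturbative argument on the WKB solutions.

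\emph{Assumption $3$.} In Lagrangian coordinates, the system (\ref{L1})--(\ref{L5}) is first order. Its nonlinearities are products of analytic functions of $(\xi,\nabla\xi,\rho,B)$ (through $\mathcal{A}$ and $J$) with a single derivative of the unknowns. By (\ref{prop1}) and (\ref{prop3}), together with the remark after Theorem~\ref{linearnonlinear} that allows higher-order terms with one derivative, Assumption~$3$ holds in $\mathcal{G}_{sing}$. Theorem~\ref{linearnonlinear} then gives the nonlinear instability.
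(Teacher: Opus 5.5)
Your proposal follows essentially the same route as the paper. You verify Assumptions 1--3 of Theorem~\ref{linearnonlinear} in the weighted setting by carrying over the $\theta$-pinch analysis: the Frobenius and Green-function bounds at $r=0$ (same exponents $\pm|m|$, resp.\ $0,2$), the large-$|\lambda|$ WKB construction (same limits of $N/rD$ on either side of $r_c$), and the unchanged one-derivative Lagrangian nonlinearity, and you give more detail than the paper does. One caveat on Assumption 1: the linearized MHD resolvent is not literally compact, since the Alfv\'en and slow continua are essential spectrum; but these lie on the imaginary $\lambda$-axis, as you note yourself, so for each fixed Fourier mode the spectrum in $\Re\lambda>0$ still consists of isolated eigenvalues of finite multiplicity, and that is all your argument for $\lambda_0$ needs.
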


\begin{proof}
The proof is essentially identical to that of Theorem~\ref{nonl} for the $\theta$-pinch, because the singular structure of the Hain--L\"ust equation near $r=0$ and near $r=r_0$ is the same.
Let us check the various assumptions.

The spectral radius $\sigma$ is finite, and by hypothesis there exists an eigenvalue $\lambda_0$ with $\Re\lambda_0>\sigma/2$. The corresponding eigenfunction is regular at $r=0$, hence proportional to $\chi_1$ which is holomorphic.

For bounded $\lambda$, the Green function estimates give the desired weighted resolvent bound. For large $\lambda$, the same WKB construction as for the $\theta$-pinch applies because the leading coefficient
$N/rD$
has the same asymptotic limits
$$
\frac{N}{rD}\sim-\frac{r}{m^2}(\rho\omega^2-F^2)
\quad\text{for }r\ll r_c,
$$
and
$$
\frac{N}{rD}\sim\frac{\gamma p+|B|^2}{r}
\quad\text{for }r\gg r_c.
$$
Thus the resolvent estimate
$$
\mathcal G_{\mathrm{sing}}\bigl[(\lambda\rho_0-L)^{-1}f\bigr](\rho_1)
\le
\frac{C}{|\lambda|}
\mathcal G_{\mathrm{sing}}[f](\rho_1)
$$
holds uniformly for large $\lambda$.

The nonlinearity is the same as in the $\theta$-pinch case.
Thus all the assumptions of the weighted version of Theorem~\ref{linearnonlinear} are satisfied, and the nonlinear instability follows.
\end{proof}

\begin{remark}
We refer to \cite{BGT2} for a detailed study of $\sigma$ in the $z$-pinch case. In particular, Theorem~2.1 of \cite{BGT2} 
states that $\sigma$ is always positive, so the plasma is always linearly unstable.
\end{remark}

%%%%%%%%%%%%%%%%%%%%%%%%%%%%%%%%%%%%%%%%%%%%%%%%%%%%%%%%%%%

\subsection{An ill-posedness result\label{secparticular}}

%%%%%%%%%%%%%%%%%%%%%%%%%%%%%%%%%%%%%%%%%%%%%%%%%%%%%%%%%%%

The assumptions \ref{assumptionAtheta} and \ref{assumptionAZ} imply that $p$ and $\rho$ do not vanish
for $0 \le r \le r_0$.
In this section we discuss the case where $p$ exactly vanishes at $r = r_0$, as for instance 
in the previously introduced case
\beq \label{particularbis}
p(r) = c^2 \Bigl(1 - {r^2 \over r_{0}^2} \Bigr), \quad B_\theta(r) = c {r \over r_{0}},
\quad
u(r) = 0.
\eeq
We will prove that the inviscid MHD system is ill-posed near such stationary states.
More precisely, let us consider a solution $(\rho,u,B)$ of the inviscid MHD system
such that $\rho(t,\cdot) > 0$ inside a domain $\Omega(t)$, and $\rho(t,\cdot) = 0$ outside.
We assume that $\rho$ has no jump on $\partial \Omega(t)$, namely that $\rho$ is continuous on both
sides of $\partial \Omega(t)$ with $\rho = 0$ on $\partial \Omega(t)$.

Using (\ref{M2}), this implies that $A = 0$ on $\partial \Omega(t)$ where
$$
A(t,x) = \nabla \Bigl( p + {1 \over 2} |B|^2 \Bigr) - (B \cdot \nabla) B.
$$
A direct computation shows that this holds true for the particular solution (\ref{particularbis}).

\begin{theorem}
Let $(p_0,u_0 = 0,B_0)$ be a $z-$pinch  stationary solution of the inviscid MHD system
satisfying (\ref{cyl}),
such that $p_0(r) > 0$ for $0 \le r < r_0$ and $p_0(r) = 0$ for $r \ge r_0$, $p'(r_0) \ne 0$ and such that $B_0(r_0) \ne 0$.
Assume that $p_0$ and $B_0$ are $C^\infty$ on $r \le r_0$.
Then, locally near this stationary solution, the inviscid MHD system is ill-posed in the following sense:
let $m$ be large enough, then
for any $s$ arbitrarily large and any $\eps > 0$ arbitrarily small, there exists an initial data
$(p(0),B(0),u(0))$ such that the corresponding vector $A$ vanishes on $\partial \Omega(0)$, such that
$$
\| p(0) - p_0 \|_{H^s} + \| B(0) - B_0 \|_{H^s}  + \| u(0) \|_{H^s} \le \eps
$$
but so that there exists no local in time solution $(p(t,x),B(t,x),u(t,x)) \in L^{\infty}(0,T,H^m)$ 
on the support of $p$, even for arbitrarily small $T > 0$.
\end{theorem}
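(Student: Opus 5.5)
The plan is to exhibit a linear instability whose growth rate is unbounded in the azimuthal wavenumber $m$ (or in $k$). This is the mechanism behind Hadamard-type ill-posedness, here localized at the free boundary, where the density degenerates. We then use the compatibility condition $A=0$ on $\partial\Omega(t)$ to turn that unbounded spectrum into a nonexistence statement in $L^\infty(0,T,H^m)$.

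\emph{First step: a boundary-localized unstable mode.} We linearize in Lagrangian coordinates as in (\ref{LMHDsimple}) and use the Hain--L\"ust form (\ref{Hain2}) of the $z$-pinch. We then study the regime $|m|\to\infty$ with the modes concentrated near $r=r_0$. Set $r=r_0-\delta y$ with $\delta\sim |m|^{-1}$. Near $r_0$ we have $p_0\sim |p_0'(r_0)|(r_0-r)$ and $\rho_0\sim (r_0-r)^{1/\gamma}$, while $B_\theta(r_0)\neq 0$. Together these give a rescaled model equation for the displacement. Because $p_0'(r_0)\neq 0$, the term $-r(B_\theta^2/r^2)'$ in $M_{11}$, combined with the compressional term, acts as an effective gravity pointing against the density gradient. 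The vanishing density is what makes the Rayleigh--Taylor-type growth rate scale like a positive power of $|m|$, with $\lambda_m\sim c|m|^{\beta}$ and $\beta>0$. Since $\rho_0\to 0$, the inertia $\rho_0\lambda^2$ stays comparable to the forcing only if $\lambda\to\infty$. We would construct an approximate eigenfunction of the rescaled problem by a variational argument. The potential energy $\delta W$, which is self-adjoint because $\rho^{-1}F$ is, is negative on suitable trial functions localized near $r_0$. Dividing by the kinetic norm $\int\rho_0|\xi|^2$, which is small there, yields a Rayleigh quotient $-\lambda_m^2\le -c|m|^{2\beta}$. This gives $\sup\Re\,\mathrm{Sp}=+\infty$.

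\emph{Second step: from the unbounded spectrum to nonexistence.} We argue by contradiction. Suppose that for all data $\eps$-close in $H^s$ and satisfying $A=0$ on $\partial\Omega(0)$, a solution exists in $L^\infty(0,T,H^m)$ with a bound. Taking differences, or linearizing with respect to a one-parameter family of data $\eps\,\xi^{(m')}$, where $\xi^{(m')}$ is the unstable mode of wavenumber $m'$, the solution map would have to be Lipschitz in a weaker norm on $[0,T]$. This follows from energy-type stability estimates in $H^{m}$ with $m$ large, which hold once $H^m$ bounds are available. The linearized flow would then have a growth rate bounded by some $C(T)$. The mode $e^{\lambda_{m'}t}\xi^{(m')}$ contradicts this for $m'$ large: its $H^s$ size at time $0$ is of order $\eps |m'|^{s}$, which we rescale to $\eps$, and it grows like $e^{c|m'|^\beta t}$ on any $t\le T$. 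We must also check that the data built from these modes satisfy the compatibility condition $A=0$ on $\partial\Omega(0)$. This holds because the eigenfunctions obey the boundary relation (\ref{condb3}) and the perturbation vanishes with $\rho_0$.

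\emph{Main obstacle.} The hard part is the first step: proving rigorously that the growth rate $\lambda_m$ tends to infinity despite the degeneracy $\rho_0\to 0$. The Frobenius analysis at $r_0$ now has exponents depending on $\gamma$, and the analytic-space machinery of Theorem \ref{linearnonlinear} no longer applies. I would try a direct variational lower bound. I would choose explicit trial displacements $\xi_r=\psi((r_0-r)|m|)e^{im\theta}$ with $\nabla\cdot\xi$ adjusted to kill the compressional term, compute $\delta W\lesssim -|m|^{-1}$ against $\int\rho_0|\xi|^2\lesssim |m|^{-1-1/\gamma}$, and obtain $\lambda_m^2\gtrsim |m|^{1/\gamma}$. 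The contradiction step additionally needs a linearization justification for solutions in $L^\infty H^m$. That is why $m$ is taken large: the nonlinear remainder in Lagrangian coordinates is then controlled in $H^{m-1}$.
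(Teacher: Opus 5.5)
\textbf{Gaps in your proposal.} Your second step does not prove the statement as written. The theorem asserts that one specific datum admits no solution in $L^\infty(0,T,H^m)$ for any $T>0$. Its negation only gives, for each datum, a solution on some interval with some bound, both depending on the datum. The uniform existence time and uniform bound you put into your contradiction hypothesis are therefore not available.

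More seriously, the claim that $H^m$ bounds yield energy-type stability estimates, hence a Lipschitz flow map in a weaker norm, cannot hold here. The difference of two solutions solves a perturbation of the linearized problem. By your own Step 1, that problem has growth rates tending to infinity, so the energy $\int\rho_0|\partial_t\xi|^2+\delta W(\xi)$ is indefinite. Since $\delta W$ is not bounded below by $-C\int\rho_0|\xi|^2$, there is no Gronwall argument. The backward heat equation is a useful comparison: it has many solutions bounded in every $H^m$, yet its flow is not Lipschitz. Mere existence also gives no differentiability of the flow map, so linearizing along a family of data is unjustified. At best, a Hadamard argument of this kind gives failure of Lipschitz (or continuous) dependence, which is weaker than the nonexistence claimed.

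Step 1 also contains a concrete error. For the $z$-pinch, $F=mB_\theta/r$, so the field-line bending term $m^2B_\theta^2r^{-2}|\xi_r|^2$ dominates for large azimuthal $|m|$ because $B_0(r_0)\neq0$. Your trial functions $\psi((r_0-r)|m|)e^{im\theta}$ therefore give $\delta W>0$. Unbounded growth comes instead from $m=0$ with large axial $k$, of the order of the inverse width of the layer. After optimizing $\xi_z$, the integrand becomes Kadomtsev's $\bigl[4\gamma pB^2/((\gamma p+B^2)r^2)+2p'/r\bigr]|\xi_r|^2$, which is negative near $r_0$. Note too that the $m$ in the statement is a Sobolev index, not a wavenumber.

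\textbf{The missing idea.} The paper uses no spectral theory at all; it uses a local compatibility obstruction that works for a single datum.

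First, $\rho$ vanishes continuously on $\partial\Omega(t)$, and a smooth solution has bounded $\partial_tu+(u\cdot\nabla)u$. Equation (\ref{M2}) then forces $A=\nabla(p+\frac12|B|^2)-(B\cdot\nabla)B=0$ on $\partial\Omega(t)$ for all $t$.

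Second, follow a boundary characteristic $X(t)$ with $\dot X(0)=u(0,X(0))=0$. Differentiating $A(t,X(t))=0$ at $t=0$ gives $\partial_tA(0,X(0))=0$.

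Third, take $p(0)=p_0$, $B(0)=B_0$ and $u(0)=a(r)e_r$, with $a$ small in $H^s$ and $a(r_0)=a'(r_0)=0\neq a''(r_0)$. The equations for $\partial_tp$ and $\partial_tb$ then give $\partial_tA_r(0,r_0)=-(1+\gamma)a'p'-3a'bb'-b^2a''-2b^2a'/r_0=-B_0(r_0)^2a''(r_0)\neq0$. This contradicts the second step.

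This is exactly where the hypothesis $B_0(r_0)\neq0$ enters. It yields nonexistence for an explicit datum, which no growth-rate argument provides by itself.
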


\begin{proof}
Let us consider a solution $(p(t,x),B(t,x),u(t,x))$ such that $p > 0$ on $\Omega(t)$ and $p = 0$ on
its complementary. We know that $\Omega(t)$ moves with the velocity $u$.
As (\ref{M1}) may be rewritten under the form
$$
\partial_t \rho + (u \cdot \nabla) \rho = - \rho \nabla \cdot u
$$
and as all the functions are smooth inside $\Omega(t)$, we see that $\rho = 0$ on $\partial \Omega(t)$.
Thus, using (\ref{M2}), $A(t,x)$ also vanishes on $\partial \Omega(t)$. 

Let us consider a characteristic $X(t)$ which starts from $\partial \Omega(0)$,
namely
$$
\dot X(t) = u(t,X(t))
$$
with $X(0) \in \partial \Omega(0)$. We note that $\dot X(0) = 0$ provided $u(0,X(0)) = 0$ (which will
be checked later).
As $A(t,x)$ vanishes at the boundary,
$$
A(t,X(t)) = 0
$$
for any time $t$.
Differentiating in time and using $\dot X(0) = 0$, as $A$ is smooth in $\Omega(t)$, we get
\beq \label{ptA}
\partial_t A(0,X(0)) = 0.
\eeq
We now construct an initial data $(p(0),B(0),u(0))$ such that at $t = 0$, (\ref{ptA}) does not hold true.
Let $(r,\theta,z)$ be the cylindrical coordinates with orthonormal basis $(e_r,e_\theta,e_z)$.
We look for initial data such that, 
\begin{itemize}
\item At $t=0$, the pressure depends only on $r$ and satisfies
\begin{equation}\label{prad}
p(0,r)=p(r),\qquad
p(r_0)=0,
\qquad
p(r)>0\quad (r<r_0).
\end{equation}
\item At $t=0$, the magnetic field is azimuthal and depends only on $r$
\begin{equation}\label{Brad}
B(0,r)=b(r)e_\theta.
\end{equation}
\item At $t=0$, the velocity is purely radial and depends only on $r$, and it vanishes at the vacuum boundary:
\begin{equation}\label{urad}
u(0,r)=a(r)e_r,
\qquad
a(r_0)=0.
\end{equation}
\end{itemize}
Under these assumptions, the divergence condition $\nabla\cdot B=0$ is satisfied,
\[
\nabla\Bigl(p+\frac12|B|^2\Bigr)
=
\Bigl(p'+bb'\Bigr)e_r,
\]
where primes denote derivatives with respect to $r$, and
\[
(B\cdot\nabla)B
=
b\,\frac{1}{r}\partial_\theta\bigl(b\,e_\theta\bigr)
=
\frac{b^2}{r}\partial_\theta e_\theta
=
-\frac{b^2}{r}e_r,
\]
therefore
\begin{equation}\label{Ar}
A(0,r) = A_r(0,r) = \Bigl(p'+bb'+\frac{b^2}{r}\Bigr)e_r .
\end{equation}
Thus $A$ has only a radial component. Note that for the particular solution $(p_0,B_0,u_0 = 0)$, 
$A(0,r_0) = 0$ is a direct consequence of (\ref{cyl})

From the mass equation and the equation of state, we have
\begin{equation}\label{dtp}
\partial_t p = -u\cdot\nabla p-\gamma p\,\nabla\cdot u
= -a p'-\gamma p\Bigl(a'+\frac{a}{r}\Bigr).
\end{equation}
%At $r=r_0$, using $a(r_0)=0$ and $p(r_0)=0$, we obtain $\partial_t p(r_0)=0$.
Differentiating \eqref{dtp} with respect to $r$ gives
\[
(\partial_t p)'
=
-a'p'-ap''
-\gamma p'\Bigl(a'+\frac{a}{r}\Bigr)
-\gamma p\Bigl(a''+\frac{a'}{r}-\frac{a}{r^2}\Bigr)
= -(1+\gamma)a'p',
\]
since $a=0$ and $p=0$ at $r = r_0$.
Next we use the induction equation
\[
\partial_t B = \nabla\times(u\times B)
% \]
% For $u=a(r)e_r$ and $B=b(r)e_\theta$,
% \[
% u\times B
% =
% a b\,e_z.
% \]
% In cylindrical coordinates,
% \[
% \nabla\times\bigl(ab\,e_z\bigr)
% =
% -\bigl(ab\bigr)'e_\theta
= -\bigl(a'b+ab'\bigr)e_\theta.
\]
Thus
\[
\partial_t b
=
-a b'-b a'.
\]
% At $r=r_0$, since $a=0$,
% \begin{equation}\label{dtb0}
% \partial_t b(r_0)
% =
% -b a'.
% \end{equation}
Differentiating $\partial_t b$ with respect to $r$ yields
\[
(\partial_t b)'
=
-a b''-2a'b'-b a''.
\]
At $r=r_0$,
\begin{equation}\label{dtbprime}
(\partial_t b)'(r_0)
=
-2a'b'-b a''.
\end{equation}
Thus, the differentiation of \eqref{Ar} gives
\[
\partial_t A_r = (\partial_t p)' + b'\partial_t b + b(\partial_t b)' + \frac{2b\,\partial_t b}{r}.
\]
At the point $X=(r_0,z)$, we get
\begin{align}
\partial_t A_r(0,X)
%&=
%-(1+\gamma)a'p' + b'\bigl(-b a'\bigr) + b\bigl(-2a'b'-b a''\bigr) + \frac{2b}{r_0}\bigl(-b a'\bigr)
%\nonumber\\
&=
-(1+\gamma)a'p' - 3a'bb' - b^2 a'' - \frac{2b^2 a'}{r_0}.
\end{align}
We choose $p = p_0$, $B = B_0$, so that $A(0,r_0) = 0$ using (\ref{Ar}).
We then choose $a(r)$ smooth, close to $0$ in $H^s$, with $a(r_0) = 0$, $a'(r_0) = 0$ and $a''(r_0) \ne 0$.
Then $\partial_t A_r(0,r_0) \ne 0$, which is in contradiction with the existence of a smooth solution 
in small time.
\end{proof}

%%%%%%%%%%%%%%%%%%%%

\subsection*{Acknowledgments}  

%%%%%%%%%%%%%%%%%%%%

D. Bian was partially supported by the NSF of China (No. 12271032)
and would like to thank Y. Guo and I. Tice for helpful discussions. This work was partially done
while D. Bian participated in the program “Mathematical developments in Geophysical
fluid dynamics” and she is grateful for the hospitality of IHP.

%%%%%%

%%%%%%%%%%%%%%%%%%%%%%%%%%%

\end{document}